\documentclass[11pt]{amsart}

\usepackage{amsmath,amssymb,mathrsfs}
\usepackage{microtype}
\usepackage[margin=0.65in]{geometry}
\usepackage{tikz}
\usetikzlibrary{arrows.meta,calc,decorations.pathreplacing}

\definecolor{ChannelBlue}{RGB}{30,104,142}
\definecolor{CapRed}{RGB}{184,72,65}
\definecolor{SmoothOrange}{RGB}{222,139,42}
\definecolor{DomainFill}{RGB}{232,244,248}
\definecolor{GuideGray}{RGB}{105,112,118}

\usepackage[hidelinks]{hyperref}
\hypersetup{
 pdftitle={Nonconvex Sublevel Sets for the Planar Translating Mean Curvature Equation},
 pdfauthor={Jiahuan Li, Yilu Liu, Xi-Nan Ma, Guohuan Qiu}
}

\allowdisplaybreaks
\numberwithin{equation}{section}

\newtheorem{theorem}{Theorem}[section]
\newtheorem{lemma}[theorem]{Lemma}
\newtheorem{proposition}[theorem]{Proposition}
\newtheorem{corollary}[theorem]{Corollary}
\theoremstyle{remark}

\newcommand{\R}{\mathbb R}
\newcommand{\Qop}{\mathcal Q}
\newcommand{\Fop}{\mathcal F}
\newcommand{\Acal}{\mathcal A}
\newcommand{\Wcal}{\mathcal W}
\newcommand{\Om}{\Omega}
\newcommand{\dd}{\,\mathrm d}

\title[A PLANAR TRANSLATOR COUNTEREXAMPLE]
{NONCONVEX SUBLEVEL SETS FOR THE PLANAR\\
TRANSLATING MEAN CURVATURE EQUATION}

\author{Jiahuan Li}
\address{School of Mathematical Sciences, University of Science and
Technology of China, Hefei, 230026, Anhui Province, P.R. China}
\email{jiahuan@mail.ustc.edu.cn}

\author{Yilu Liu}
\address{School of Mathematical Sciences, University of Science and
Technology of China, Hefei, 230026, Anhui Province, P.R. China}
\email{liuylgeoanaly@mail.ustc.edu.cn}

\author{Xi-Nan Ma}
\address{School of Mathematical Sciences, University of Science and
Technology of China, Hefei, 230026, Anhui Province, P.R. China}
\email{xinan@ustc.edu.cn}

\author{GUOHUAN QIU}

\address{Institute of Mathematics, Academy of Mathematics and Systems Science,
Chinese Academy of Sciences, No. 55 Zhongguancun East Road, Beijing 100190,
China}
\email{qiugh@amss.ac.cn}

\subjclass[2020]{35J93, 35B06, 53E10}
\keywords{translating mean curvature equation, convex domain,
nonconvex sublevel set, grim-reaper profile}

\begin{document}

\begin{abstract}
Translating solitons arise as models for type~II singularities of
mean-convex mean curvature flow.  We construct a smooth bounded uniformly
convex domain \(\Om\Subset\R^2\) such that the zero-Dirichlet solution of
the planar translating mean curvature equation has a nonconvex sublevel
set.  The construction is based on a corrected
near-critical grim-reaper profile and explicit barriers on a long convex
channel.
\end{abstract}

\maketitle

\section{Introduction}

Singularity formation is a central issue in mean curvature flow, and the
geometry of blow-up limits governs the flow near a singular time.  Let
\(\{M_t\}_{0\leq t<T}\) be a smooth compact mean
curvature flow that becomes singular at the finite time \(T\), and let
\(A\) denote the second fundamental form of \(M_t\).  The singularity is
of type~I if
\[
 \sup_{0\leq t<T}(T-t)\sup_{M_t}|A|^2<\infty,
\]
and of type~II otherwise.  Thus type~I curvature blow-up is at most of
order \((T-t)^{-1/2}\), whereas at a type~II singularity the curvature
concentrates faster along some sequence of space-time points.  Parabolic
blow-ups at type~I singularities are modeled by self-shrinking solutions.
For mean-convex flow, suitable blow-ups at type~II singularities give
convex translating solutions \cite[pp.~1185--1186]{Wang2011}.
Accordingly, translating solutions play the same role in the analysis of
type~II singularities as self-shrinking solutions do for type~I
singularities.

This connection makes the classification and level-set geometry of
translators part of the analysis of singularity models.  Wang formulated
a Bernstein-type classification problem for complete translators and a
related propagation problem for convex level sets; the latter was noted
to remain open even for the constant prescribed mean curvature equation
\cite[Section~5]{WangICCM2008}.  Although the present paper concerns a
bounded-domain problem rather than a complete translator, solutions of
bounded Dirichlet problems enter this classification program as
approximants in constructions of complete solutions
\cite[p.~1189]{Wang2011}.

Let \(\Om\Subset\R^2\) be a smooth bounded domain.  We consider the
Dirichlet problem
\begin{equation}\label{eq:intro-problem}
 \begin{cases}
 \displaystyle
 \operatorname{div}\!\left(\dfrac{Du}{\sqrt{1+|Du|^2}}\right)
 =\dfrac{1}{\sqrt{1+|Du|^2}}
 &\text{in }\Om,\\[1.1ex]
 u=0&\text{on }\partial\Om.
 \end{cases}
\end{equation}
The graph of a solution translates by mean curvature flow in a fixed
vertical direction.  On a smooth bounded mean convex domain,
\eqref{eq:intro-problem} has a unique classical solution
\cite[Theorem~C.2]{Zhou2024}.  The maximum principle gives \(u<0\) in
\(\Om\).  Thus the relevant convexity question is whether the sublevel
sets
\[
 \Om_c=\{x\in\Om:u(x)<c\},\qquad \min_{\Om}u<c<0,
\]
inherit the convexity of the domain.
The graphical translating mean curvature flow with Dirichlet data,
including its long-time existence and convergence, was studied by Ma
\cite{Ma2018}.

For the level-set-flow equation, Wang's compactness argument for bounded
Dirichlet approximations uses the concavity of \(\log(-u)\).  Wang
explicitly observed that the analogous logarithmic concavity was still
open for the translating mean curvature equation
\cite[p.~1189]{Wang2011}.  In the present notation, the question is
whether \(\log(-u)\) must be concave when \(\Om\) is convex.  Since every
superlevel set of a concave function is convex, an affirmative answer
would force every sublevel set \(\{u<c\}\), \(c<0\), to be convex.

From the PDE viewpoint, convexity of level sets is a classical theme in
elliptic equations.
Concavity maximum principles and power-concavity methods yield positive
results for a number of semilinear and quasilinear Dirichlet problems;
we refer to
\cite{CaffarelliSpruck1982,Korevaar1983,Kennington1985,Kawohl1985}
and the references therein.  For equations of mean-curvature type,
Shiffman proved convexity of the horizontal sections of a minimal
annulus bounded by convex curves in parallel planes
\cite{Shiffman1956}; Korevaar obtained the corresponding graph result
in higher dimensions \cite{Korevaar1990}.  For the prescribed constant
mean curvature equation over a convex planar domain, Sakaguchi proved a
positive result in the small-curvature regime \cite{Sakaguchi1989}.

Convexity of the domain alone, however, does not force quasiconvexity.
Hamel, Nadirashvili, and Sire constructed counterexamples for
semilinear equations in convex domains and convex rings
\cite{HamelNadirashviliSire2016}.  Wang disproved the general
level-set convexity conjecture for the constant mean curvature equation,
even on a smooth uniformly convex domain \cite{Wang2014}.  More
recently, Zhang showed that minimal strictly stable solutions of
Gelfand and shifted-power problems may have nonconvex superlevel sets
in smooth uniformly convex planar domains \cite{Zhang2026}.
For complete \(2\)-convex translators without boundary, Spruck and Sun
proved a convexity theorem \cite{SpruckSun2021}.  In contrast, the main
result below shows that convexity can fail for bounded zero-Dirichlet
solutions of the translating mean curvature equation.

\begin{theorem}\label{thm:main}
There exists a bounded \(C^\infty\) uniformly convex domain
\(\Om\Subset\R^2\) such that the unique solution
\(u\in C^\infty(\Om)\cap C(\overline\Om)\) of
\eqref{eq:intro-problem} has a nonconvex sublevel set.  More precisely,
\[
 \{x\in\Om:u(x)<-\log 2\}
\]
is not convex. 
\end{theorem}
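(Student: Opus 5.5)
The plan is to reduce the problem to an explicit one-dimensional model and then show that a carefully chosen convex channel forces the $\{u<-\log 2\}$ level curve to bend the wrong way. For $u=u(x_2)$ the equation \eqref{eq:intro-problem} becomes $u''=1+(u')^2$. The solution vanishing at $x_2=\pm a$, with $a<\pi/2$, is the grim-reaper profile
\[
 w_a(x_2)=\log\frac{\cos a}{\cos x_2},\qquad \{w_a<-\log 2\}=\{\cos x_2>2\cos a\}.
\]
This set is nonempty exactly when $a>\pi/3$, and $\min w_a=\log\cos a\to-\infty$ as $a\uparrow\pi/2$. The first thing I would check is the naive ``local $1$D'' picture: on a slowly varying channel of half-width $a(s)$, the level curve would have half-width $b(s)=\arccos(2\cos a(s))$. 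A Taylor expansion shows that $b$ is a concave function of $a$, both near $a=\pi/3$ (square-root tip) and near $a=\pi/2$. So a purely adiabatic approximation reproduces convexity, and the counterexample must come from the correction to the grim reaper, in the near-critical regime $a\approx\pi/2$.

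\emph{Domain.} I would take $\Om$ to be a long, thin, smoothed convex channel along the $x_1$-axis. Its half-width profile $a(x_1)$ is concave, equal to a near-critical value $\pi/2-\varepsilon$ on a long central stretch, and decreases through the threshold $\pi/3$ before convex caps close off the ends. Afterwards I would perturb slightly to make $\Om$ $C^\infty$ and uniformly convex, which should not affect strict inequalities.

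\emph{Corrected profile.} In the near-critical region the solution is very deep, of order $\log\varepsilon$. Longitudinal derivatives, although small relative to the transverse ones, change the depth at order one, because the depth is so sensitive to $a$. I would write $u\approx w_{a(x_1)}(x_2)+\phi(x_1,x_2)$ and compute the first correction $\phi$ from the linearization of the operator at $w_a$. The aim is to show that $\phi$ makes $u$ less negative (by a definite amount) in the middle of the channel near where the width begins to drop than at two flanking points. Concretely, I want two points $P,Q\in\{u<-\log 2\}$ whose midpoint $M$ satisfies $u(M)>-\log 2$. This gives a pinching of the sublevel set, either a waist or an inward dent in its boundary, that the $1$D formula does not see.

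\emph{Barriers and comparison.} Next I would build explicit sub- and supersolutions on the channel of the form $w_{a(x_1)\pm\delta(x_1)}(x_2)\pm C\psi(x_1,x_2)$. I would verify the differential inequalities directly, using concavity of the profile and the smallness of $a'$ and $a''$, and check the boundary ordering on the caps. The comparison principle for \eqref{eq:intro-problem} then gives two-sided bounds on $u$ at $P$, $Q$ and $M$ with errors smaller than the gaps $u(P),u(Q)<-\log 2<u(M)$. Uniqueness of the solution, from the existence result cited after \eqref{eq:intro-problem}, identifies $u$ with the function trapped between the barriers.

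The main obstacle is this barrier step. The errors must be controlled uniformly even though the depth blows up like $\log\varepsilon$. The same sensitivity that produces the order-one correction also amplifies any mismatch in the barriers. I would first try to balance the channel length, $|a'|$ and $\varepsilon$ so that the correction $\phi$ dominates the barrier error by a fixed margin at the three test points.
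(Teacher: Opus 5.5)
Your outline has the right architecture: the grim-reaper reduction, the adiabatic no-go (your concavity of $\arccos(2\cos a)$ is the content of Proposition~\ref{prop:no-go}), the near-critical width, a corrected profile, barriers, and a three-point midpoint test. The gap is that the step which actually creates nonconvexity is stated only as an aim. Moreover, the configuration you describe puts the test points where the effect has the wrong sign.

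Write the $-\log 2$ level curve of the corrected profile as $r(x)=\mathscr R(\gamma(x),\tau)$, with $\gamma$ the half-width and $\tau\approx(\gamma')^2$ the fixed squared modulation speed. As in \eqref{eq:r-chain-needed},
\[
 r''=\mathscr R_{\gamma\gamma}(\gamma')^2+\mathscr R_\gamma\gamma'',\qquad \mathscr R_\gamma\approx 2 .
\]
On your flat central stretch $r''\approx0$. At the onset of the drop $\gamma'\approx0$ and $\gamma''<0$, so $r''\approx2\gamma''<0$. That is a concave upper boundary, hence a locally convex sublevel set.

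The size of the correction is not what matters; in the paper it is only $\tau q^{-3}=q^{3/2}$ at the test level. What is missing is that the correction linear in $(\gamma')^2$, namely the corrector \eqref{eq:corrector-ode} (not your $w_a$), reverses the sign of $\mathscr R_{\gamma\gamma}$. For $\gamma=\pi/2-q\xi$ it turns $-6q\xi$ into $-6q\xi+\tau q^{-4}\cdot 3\pi/(2\xi^4)$. This gain is usable only in a window:
\begin{itemize}
\item $\tau\gg q^5$, to beat the grim-reaper term;
\item $|\gamma''|\ll\tau^2q^{-4}$, so that wall curvature does not cancel the gain;
\item $\tau\ll q^4$ (with a first-order corrector), so that the barrier error $O(\tau^2q^{-7})$ stays below the value gap $O(\tau q^{-3})$.
\end{itemize}
So the test points must lie where the width is essentially affine with a tuned nonzero slope. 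The paper's $\gamma_q=\arccos q+q^{9/4}x-q^7x^2$ gives $r''\asymp q^5$, a vertical defect $\asymp q^{5/2}$ over $|x|\le\theta q^{-5/4}$, a value gap $\asymp q^{3/2}$, and a comparison error of only $O(q^2)$.

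The barrier step also needs more than you indicate. Barriers $w_{a\pm\delta}\pm C\psi$ built from the domain's own width are not approximate solutions where that width falls through $\pi/3$ and the $O(1)$-curvature caps close the channel. Checking the boundary ordering on the caps is therefore most of the work, not a routine check.

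The paper never shifts the width. It uses $V_q\mp a_q\Psi_q$ with $\Psi_q=\Acal(\gamma_q)-\Acal(y)$, which vanishes on the channel walls and satisfies $(\Psi_q)_{yy}-2\tan y\,(\Psi_q)_y=-\sec^2y$. Hence $a_q=Kq^3$ absorbs the $O(q^3\sec^2y)$ residual at cost $a_q\Psi_q=O(q^2)$. It keeps $\Om_q$ inside the extended channel, so that $V_q\le0$ on the caps. It then adds $e_q\Psi_q$ with $e_q\propto q|\log q|\cosh(\varepsilon_0qx)/\cosh(\varepsilon_0q\Lambda_q)$, and uses $-V_q\le C_*q|\log q|\,\Psi_q$ to order the boundary values. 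At the test points this costs only $O(|\log q|e^{-cq^{-1/4}})$.

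Without the slope window and these devices, your plan remains a strategy rather than a proof.
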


In particular, Theorem~\ref{thm:main} gives a negative answer to the
logarithmic concavity question from \cite{Wang2011} in the bounded
zero-Dirichlet setting, already in dimension two and on a smooth uniformly
convex domain.  Indeed, for every \(c<0\),
\[
 \{u<c\}=\{\log(-u)>\log(-c)\}.
\]
If \(\log(-u)\) were concave, the right-hand side would be convex.  Taking
\(c=-\log 2\) contradicts Theorem~\ref{thm:main}.  This conclusion concerns
bounded Dirichlet solutions; it does not resolve the distinct
entire-solution propagation problem formulated in
\cite[Section~5]{WangICCM2008}.

The proof uses a corrected grim-reaper profile on a long channel.  A
direct slow-variable stretching, in the spirit of Wang's construction,
first leads to a one-dimensional profile.  For
\eqref{eq:intro-problem} that profile is necessarily a grim reaper.
Unlike the circular profile for the constant mean curvature equation,
a grim-reaper profile with slowly varying center and width cannot by
itself produce a nonconvex sublevel set while its zero channel remains
convex.  This elementary obstruction is recorded in
Proposition~\ref{prop:no-go}.

The obstruction is removed at the near-critical half-width
\(\pi/2-q\).  We introduce an explicit corrector which cancels the
complete term linear in the squared modulation speed.  With modulation
scale \(q^{9/4}\), the corrected model satisfies the weighted residual
estimate
\[
 |\Fop[V_q](x,y)|\leq Cq^3\sec^2y
\]
and has a finite midpoint gap of order \(q^{3/2}\).  The zero channel is completed
by smooth strictly convex caps.  Explicit upper and lower barriers then
approximate the zero-Dirichlet solution with error
\[
 O(q^2)+O\!\left(|\log q|e^{-cq^{-1/4}}\right)=o(q^{3/2})
\]
at the three points which detect the midpoint gap.

  This paper is organized as follows.
 In section 2, we   use a corrected grim-reaper profile on a long channel and it leads to a one-dimensional profile. In section 3 we  construct the strictly convex domain and solve the Dirichlet solution. In section 4, we use the comparison principle to detect the nonconvexity of the level set for the solution in section 3. In section 5, we complete the proof of the Theorem~\ref{thm:main}.

\section{The grim-reaper reduction and the corrector}
In this section,  we   use a corrected grim-reaper profile on a long channel and it leads to a one-dimensional profile, then we make the corrector.

Repeated indices are summed.  The constants \(c,C>0\) are independent
of the small parameter \(q\) and may change from line to line.

\subsection{The operator and the one-dimensional equation}

For \(p\in\R^2\), set
\[
 a^{ij}(p)=\delta_{ij}-\frac{p_ip_j}{1+|p|^2}.
\]
Equation \eqref{eq:intro-problem} is equivalent to
\[
 \Qop[u]:=a^{ij}(Du)u_{ij}=1.
\]
Writing the coordinates as \((x,y)\), define
\begin{align}
 \Fop[u]
 :={}&(1+u_y^2)u_{xx}-2u_xu_yu_{xy}
 +(1+u_x^2)u_{yy}\notag\\
 &\quad -(1+u_x^2+u_y^2).
 \label{eq:Fdef}
\end{align}
Then
\begin{equation}\label{eq:FQ}
 \Fop[u]=(1+|Du|^2)(\Qop[u]-1).
\end{equation}
Thus \(\Fop[u]\) and \(\Qop[u]-1\) have the same sign.

If \(u\) depends only on \(y\), the equation reduces to
\begin{equation}\label{eq:grim-ode}
 \frac{u_{yy}}{1+u_y^2}=1,
 \qquad\text{or equivalently}\qquad
 u_{yy}=1+u_y^2.
\end{equation}
Every solution of \eqref{eq:grim-ode}, up to translations, is
\[
 u(y)=-\log\cos(y-c)+C.
\]
Consequently, if a one-dimensional zero profile is placed on the
interval \(c-A<y<c+A\), then it is necessarily
\begin{equation}\label{eq:zeroth-profile}
 U_0(x,y)
 =-\log\cos\bigl(y-c(x)\bigr)+\log\cos A(x).
\end{equation}
This is the grim-reaper reduction.

\subsection{The zeroth-order obstruction}

The direct stretched-graph construction used for the constant mean
curvature equation suggests allowing the center and the half-width in
\eqref{eq:zeroth-profile} to vary slowly.  The following proposition
shows why this construction cannot work without a corrector.

\begin{proposition}\label{prop:no-go}
Let \(I\subset\R\) be an interval, let \(c,A\in C^2(I)\), and suppose
\(0<A(x)<\pi/2\).  If the channel
\[
 \mathcal D
 =\{(x,y):x\in I,\ c(x)-A(x)<y<c(x)+A(x)\}
\]
is convex, then every sublevel set of the function \(U_0\) in
\eqref{eq:zeroth-profile} is convex.
\end{proposition}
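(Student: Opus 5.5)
The plan is to write every sublevel set of \(U_0\) explicitly as a channel of the same form as \(\mathcal D\), with the same center \(c(x)\) and a new half-width \(B_t(x)\). We then show that the convexity condition on \(\mathcal D\) passes from \(A\) to \(B_t\).

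First, \(\mathcal D\) is convex exactly when its upper boundary \(c+A\) is concave and its lower boundary \(c-A\) is convex on the interval \(I\). In terms of second derivatives this reads
\[
 |c''|\le -A''\quad\text{on }I,
\]
and in particular \(A\) is concave.

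Next, fix \(t\) and put \(s=e^{-t}\). Since \(\log\cos(y-c)\le 0\), we have \(\log\cos A\le U_0<0\) on \(\mathcal D\). So for \(t\ge 0\) the sublevel set is all of \(\mathcal D\), and there is nothing to prove. For \(t<0\) we have \(s>1\), and the condition \(U_0<t\) becomes \(\cos(y-c(x))>s\cos A(x)\). This is equivalent to
\[
 |y-c(x)|<B(x):=g(A(x)),\qquad g(a)=\arccos(s\cos a),
\]
on the set \(J=\{x\in I:\ s\cos A(x)<1\}=\{x\in I:\ A(x)>\arccos(1/s)\}\); for \(x\notin J\) the vertical slice is empty. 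Because \(A\) is concave, \(J\) is a superlevel set of a concave function and hence an interval.

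The key step is a one-variable computation for \(g\) on \(\{a: s\cos a<1\}\). Writing \(h=s\cos a\), we get
\[
 g'(a)=\frac{s\sin a}{\sqrt{1-h^2}}\ge 1,
 \qquad
 g''(a)=\frac{s\cos a\,(1-s^2)}{(1-h^2)^{3/2}}\le 0 .
\]
The bound \(g'\ge 1\) is equivalent to \(s^2\sin^2a\ge 1-s^2\cos^2a\), which is just \(s^2\ge 1\). The sign of \(g''\) follows from \(\cos a>0\) and \(s>1\). Then, on \(J\),
\[
 B''=g''(A)(A')^2+g'(A)A''\le g'(A)A''\le A''\le -|c''|,
\]
where the middle inequalities use \(A''\le0\) and \(g'\ge1\). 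Hence \(c+B\) is concave and \(c-B\) is convex on the interval \(J\).

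Finally, \(\{U_0<t\}=\{(x,y): x\in J,\ c(x)-B(x)<y<c(x)+B(x)\}\) is the region between a convex function and a concave function over an interval, and such a region is convex. The only step with real content is the sign computation for \(g'\) and \(g''\); the rest is bookkeeping, including checking that \(J\) is an interval and that the boundary cases \(t\ge0\) and \(J=\emptyset\) are trivial.
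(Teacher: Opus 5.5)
Your proposal is correct and follows the paper's proof essentially step for step. It makes the same reduction to \(|c''|\le -A''\), describes \(\{U_0<t\}\) in the same way as a channel of half-width \(\arccos(e^{-t}\cos A)\) over the interval \(\{A>\arccos(e^{t})\}\), and uses the same sign computation \(g'\ge 1\), \(g''\le 0\). Your chain \(B''\le g'(A)A''\le A''\le -|c''|\) is just a slightly more compact way of writing the paper's estimate \((R_h'-1)A''+R_h''(A')^2\le 0\).
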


\begin{proof}
Convexity of the upper and lower boundary graphs gives
\[
 \frac{\partial^2 c}{\partial x^2}+\frac{\partial^2 A}{\partial x^2} \leq0,\qquad \frac{\partial^2 c}{\partial x^2}-\frac{\partial^2 A}{\partial x^2}\geq0.
\]
Hence
\begin{equation}\label{eq:center-width-curvature}
 \frac{\partial^2 A}{\partial x^2} \leq \frac{\partial^2 c}{\partial x^2} \leq-\frac{\partial^2 A}{\partial x^2},\qquad \frac{\partial^2 A}{\partial x^2}\leq0.
\end{equation}
Since \(U_0<0\) throughout \(\mathcal D\), one has
\(\{U_0<\ell\}=\mathcal D\) for every \(\ell\geq0\).  It remains to
consider \(\ell=-h<0\).
For \(h>0\), let
\[
 I_h^\sharp=\{x\in I:A(x)>\arccos(e^{-h})\}.
\]
This is an interval, since it is a strict superlevel set of the
concave function \(A\).  If \(I_h^\sharp\) is empty, the corresponding
sublevel set is empty.  Otherwise, for \(x\in I_h^\sharp\), the level
\(U_0=-h\) is bounded by
\[
 y=c(x)\pm R_h(A(x)),\qquad
 R_h(A)=\arccos(e^h\cos A).
\]
Direct differentiation gives
\[
 \frac{\partial R_h}{\partial A} =\frac{e^h\sin A}{\sin R_h}>1
\]
and
\[
  \frac{\partial^2 R_h}{\partial A^2}
 =\frac{e^h\cos A}{\sin R_h}
 \left(1-\frac{e^{2h}\sin^2A}{\sin^2R_h}\right)<0.
\]
Using \eqref{eq:center-width-curvature}, we obtain
\[
 \frac{\partial^2 (c+R_h(A))}{\partial x^2}
 \leq(\frac{\partial R_h}{\partial A}-1) \frac{\partial^2 A}{\partial x^2}+\frac{\partial^2 R_h}{\partial A^2} (\frac{\partial A}{\partial x})^2\leq0
\]
and
\[
 \frac{\partial^2 (c-R_h(A))}{\partial x^2}
 \geq(1-\frac{\partial R_h}{\partial A}) \frac{\partial^2 A}{\partial x^2}-\frac{\partial^2 R_h}{\partial A^2}(\frac{\partial A}{\partial x})^2\geq0.
\]
Thus the upper level graph is concave and the lower level graph is
convex on \(I_h^\sharp\).  More precisely,
\[
 \{U_0<-h\}
 =\{(x,y):x\in I_h^\sharp,\
 c(x)-R_h(A(x))<y<c(x)+R_h(A(x))\}.
\]
The right-hand side is convex: the projection interval
\(I_h^\sharp\) is convex, its upper boundary is concave, and its lower
boundary is convex.  This proves the assertion also in the equality
case \(A(x)=\arccos(e^{-h})\), where the strict sublevel set has no
point over \(x\).
\end{proof}

\subsection{The near-critical scale and the corrector}

Fix a small number \(0<\Theta<10^{-3}\).  For \(q>0\), define
\begin{equation}\label{eq:parameters}
 \gamma_0=\arccos q,\qquad
 \lambda=q^{9/4},\qquad
 \tau=\lambda^2=q^{9/2},\qquad
 L_q=\frac{\Theta q}{\lambda}=\Theta q^{-5/4}.
\end{equation}
Let
\[
 J_q=L_q+2,\qquad \Lambda_q=J_q+4,
\]
and, on \(|x|\leq\Lambda_q\), put
\begin{equation}\label{eq:gamma}
 \gamma_q(x)=\gamma_0+\lambda x-q^7x^2.
\end{equation}

Set
\[
 g(s)=-\log\cos s,\qquad
 \Acal(s)=s\tan s+\log\cos s.
\]
Then
\[
 g'(s)=\tan s,\qquad
 g''(s)=\sec^2s,\qquad
 \Acal'(s)=s\sec^2s.
\]

The following calculation motivates the choice of the corrector \(w\).
We first ignore the non-affine part of the width function and suppose that
\(\gamma\) is affine, with
\[
 (\gamma_x)^2=\tau,\qquad \gamma_{xx}=0.
\]
Consider the uncorrected profile
\[
 U_0(x,y)=g(y)-g(\gamma(x)).
\]
Writing
\[
 t=\tan y,\qquad S=\sec^2 y,\qquad
 T=\tan\gamma,\qquad \Sigma=\sec^2\gamma,
\]
a direct substitution into \eqref{eq:Fdef} gives
\begin{equation}\label{eq:uncorrected-residual}
 \Fop[U_0]=-\tau(S+T^2).
\end{equation}
Thus \(U_0\) fails to solve the equation by an error of order \(\tau\).

We now look for a correction of the form
\[
 U_0(x,y)+\tau w(\gamma(x),y).
\]
Expanding the residual to first order in \(\tau\), we find that the
coefficient of \(\tau\) is
\[
 w_{yy}-2\tan y\,w_y-\bigl(\sec^2y+\tan^2\gamma\bigr).
\]
Consequently, the entire first-order error is canceled if \(w\) satisfies
\begin{equation}\label{eq:corrector-ode}
 w_{yy}-2\tan y\,w_y
 =\sec^2y+\tan^2\gamma,
 \qquad
 w(\gamma,\pm\gamma)=0.
\end{equation}
The boundary conditions are imposed so that the correction does not
alter the zero set \(y=\pm\gamma\). An explicit solution is
\begin{equation}\label{eq:wdef}
 w(\gamma,y)
 =\Acal(y)-\Acal(\gamma)
 +\frac{\tan^2\gamma}{2}
 \bigl(y\tan y-\gamma\tan\gamma\bigr).
\end{equation}

Returning to the actual width function \(\gamma_q\), we define the
corrected profile by
\begin{equation}\label{eq:Vdef}
 V_q(x,y)
 =g(y)-g(\gamma_q(x))
 +\tau w(\gamma_q(x),y).
\end{equation}
Since both \(\Acal(s)\) and \(s\tan s\) are even and strictly increasing
on \([0,\pi/2)\), the corrected profile has the same zero boundary and
the same sign in the channel:
\begin{equation}\label{eq:Vboundary}
 V_q(x,\pm\gamma_q(x))=0,
 \qquad
 V_q(x,y)<0
 \quad\text{whenever } |y|<\gamma_q(x).
\end{equation}

\section{The convex domain and the Dirichlet solution}
\label{sec:domain}
  In this section we construct the strictly convex domain and solve the Dirichlet solution.
\subsection{Closing the lateral zero arcs}

Let
\[
 \mathcal C_q
 =\{(x,y):|x|\leq\Lambda_q,\ |y|\leq\gamma_q(x)\}
\]
be the extended channel.  By \eqref{eq:Vboundary}, the zero set of
\(V_q\) in \(\mathcal C_q\) consists of the two lateral arcs
\[
 y=\pm\gamma_q(x),\qquad |x|\leq\Lambda_q.
\]
We retain the central parts of these arcs and complete them by strictly
convex caps.  The caps lie in \(\{V_q<0\}\); thus the completed curve is
not asserted to be the zero set of \(V_q\).  It will be the zero
boundary for the Dirichlet problem constructed below by Proposition \ref{prop:existence}.

At \(x=J_q\), let
\[
 Y_+=\gamma_q(J_q),\qquad m_+=\gamma_q'(J_q),
\]
and define
\[
 c_+=J_q+m_+Y_+,\qquad
 R_+=Y_+\sqrt{1+m_+^2}.
\]
The upper graph of the right circle
\[
 (x-c_+)^2+y^2=R_+^2
\]
is
\[
 \beta_+(x)=\sqrt{R_+^2-(x-c_+)^2}.
\]
It satisfies
\[
 \beta_+(J_q)=Y_+,\qquad
 \beta_+'(J_q)=m_+,\qquad
 \beta_+''=-\frac{R_+^2}{\beta_+^3}\leq-c.
\]
Since \(\gamma_q''=-2q^7\), tangency at \(J_q\) gives
\begin{equation}\label{eq:cap-below-channel}
 \beta_+(x)\leq\gamma_q(x)
\end{equation}
on the graphical part of the cap.  The right endpoint has
\(x\)-coordinate \(c_++R_+\).  Directly from the definitions,
\[
 Y_+=\frac{\pi}{2}+O(q),\qquad m_+=O(q^{9/4}),
\]
and therefore
\[
 (c_++R_+)-J_q
 =m_+Y_++Y_+\sqrt{1+m_+^2}
 =\frac{\pi}{2}+O(q)<3
\]
for all sufficiently small \(q\).  Thus the right endpoint is less
than \(J_q+3<\Lambda_q\).  The left cap is constructed by applying the same
argument to \(\widehat\gamma_q(X)=\gamma_q(-X)\) and reflecting back.

We use the following interpolation lemma at the two junctions.

\begin{lemma}[Two-moment smoothing]\label{lem:smoothing}
Let \(a<J<b\).  Let \(\gamma\in C^\infty([a,b])\) and
\(\beta\in C^\infty([J,b])\) be concave functions satisfying
\[
 \gamma(J)=\beta(J),\qquad
 \gamma'(J)=\beta'(J).
\]
Suppose that
\[
 \gamma''\equiv-\kappa<0\quad\text{on }[a,b],
 \qquad
 \beta''\leq\gamma''\quad\text{on }[J,b],
\]
and that the latter inequality is strict on a nonempty subinterval of
\((J,b)\). Then there exists \(f\in C^\infty([a,b])\) which agrees
with \(\gamma\) near \(a\), agrees with \(\beta\) near \(b\), and
satisfies
\[
 f''<0,\qquad f\leq\gamma\quad\text{on }[a,b].
\]
\end{lemma}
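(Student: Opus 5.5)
We work at the level of second derivatives. Set \(\phi=\beta''-\gamma''\le 0\) on \([J,b]\); it is strictly negative on some subinterval \([s,t]\subset(J,b)\). The candidate is
\[
 f''=\gamma''+\psi,
\]
where \(\psi\) is a smooth function that vanishes near \(a\) (indeed on \([a,J]\)) and equals \(\phi\) near \(b\). We then recover \(f\) by integrating twice from \(a\), using the initial data \(f(a)=\gamma(a)\) and \(f'(a)=\gamma'(a)\).

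For \(f\) to agree with \(\beta\) near \(b\), we need \(f(b)=\beta(b)\) and \(f'(b)=\beta'(b)\), given that \(f''=\beta''\) near \(b\). Write \(h=f-\gamma\), so \(h''=\psi\) and \(h(a)=h'(a)=0\). The two matching conditions then become two moment conditions on \(\psi\):
\[
 \int_a^b\psi=\int_J^b\phi,\qquad \int_a^b(b-x)\psi(x)\,dx=\int_J^b(b-x)\phi(x)\,dx .
\]
These follow from Taylor's formula and the equalities \(\beta(J)=\gamma(J)\), \(\beta'(J)=\gamma'(J)\).

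We construct \(\psi\) as follows.
\begin{itemize}
 \item Fix a smooth cutoff \(\chi\) with \(0\le\chi\le1\), \(\chi=0\) on \([a,s']\) and \(\chi=1\) on \([t',b]\), with \(J<s'<t'<b\).
 \item Set \(\psi=\chi\phi+\mu_1\rho_1+\mu_2\rho_2\), where \(\rho_1,\rho_2\ge 0\) are smooth bumps supported inside \((J,t')\).
 \item Choose \(\mu_1,\mu_2\) to solve the \(2\times2\) linear system coming from the two moment conditions. It is solvable as long as the bumps sit at different locations, because then \((\int\rho_i,\int(b-x)\rho_i)\) are linearly independent.
\end{itemize}

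Concavity of \(f\) requires \(f''=\gamma''+\psi<0\). Since \(\gamma''=-\kappa\) and \(\chi\phi\le0\), it is enough to keep \(|\mu_i\rho_i|<\kappa\). This can be arranged by first choosing \(\chi\) so that the right-hand sides of the system, namely \(\int(1-\chi)\phi\) and its weighted analogue, are small. That means moving \(s',t'\) close to \(J\), which is possible only if \(\phi\) is small there. This is where I expect the real difficulty.

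The more robust route is to let the correction carry the right sign. The deficits \(D_0=\int_J^b(1-\chi)\phi\le0\) and \(D_1=\int_J^b(b-x)(1-\chi)\phi\le0\) must be reproduced by \(\mu_1\rho_1+\mu_2\rho_2\). I would therefore take \(\mu_i\le0\): a nonpositive correction only makes \(f''\) more negative, so \(f''<0\) is automatic. The task is then to show that \((D_0,D_1)\) lies in the cone spanned by the moment vectors \((\int\rho,\int(b-x)\rho)\) of nonnegative functions supported in \((J,t')\).

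Because \((1-\chi)\phi\) is itself a nonpositive function supported in \([J,t']\), it is a legitimate choice of correction. So the simplest choice is to take \(\psi=\phi\) on \([J,b]\) after smoothing the junction at \(J\). The problem is that \(\phi\) may jump at \(J\) (\(\beta''(J)\ne\gamma''(J)\)), so \(f\) would fail to be smooth there. The strict inequality on \([s,t]\) is the slack that fixes this: we replace \(\phi\) near \(J\) by a smooth nonpositive \(\tilde\phi\) that vanishes near \(J\), and compensate the moment errors by a small adjustment \(\epsilon_1\rho_1+\epsilon_2\rho_2\) with \(\rho_i\) supported in \((s,t)\). There \(\phi\le-\delta<0\), so \(\phi+\epsilon\rho\) stays \(\le0\) once the \(\epsilon_i\) are small. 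They are small because the smoothing region has small measure, and the system is invertible because the two bumps are placed at distinct points of \((s,t)\).

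To conclude \(f\le\gamma\), note that \(h''=\psi\le0\) with \(h(a)=h'(a)=0\). Hence \(h'\le0\), so \(h\le0\), which is \(f\le\gamma\). This also uses \(\psi\le0\) everywhere, which the second construction guarantees. The main obstacle is this bookkeeping: ensuring the moment-correcting bumps keep \(\psi\le0\) and \(f''<0\) while fixing the two moments exactly.
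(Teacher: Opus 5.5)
Your final construction is correct and is essentially the paper's proof: cut $\beta''-\gamma''$ off smoothly in a small neighborhood of $J$ (to the left of the region where $\beta''\le\gamma''-\delta$), restore the two moments (slope and value at $b$) with two small bumps of distinct barycenters placed in that region, and conclude $f''\le\gamma''=-\kappa$ and $f\le\gamma$ from $h''\le 0$, $h(a)=h'(a)=0$. The paper does the same with a width-$\varepsilon$ cutoff, moment errors and bump coefficients of size $O(\varepsilon)$, and a $2\times2$ system with determinant $\mu_1-\mu_2\neq0$. Your earlier exploratory routes (the far cutoff $\chi$ and the cone argument) are unnecessary.
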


\begin{proof}
Consider the piecewise second derivative
\[
 m_0(t)=
 \begin{cases}
  \gamma''(t),&a\leq t<J,\\
  \beta''(t),&J<t\leq b.
 \end{cases}
\]
The matching conditions at \(J\) imply
\begin{align}
 \int_a^b m_0(t)\,\dd t
 &=\beta'(b)-\gamma'(a),
 \label{eq:smoothing-moment-zero}\\
 \int_a^b(b-t)m_0(t)\,\dd t
 &=\beta(b)-\gamma(a)-(b-a)\gamma'(a).
 \label{eq:smoothing-moment-one}
\end{align}
Indeed, the first identity follows by integrating separately over
\([a,J]\) and \([J,b]\). For the second one, integration by parts on
these two intervals gives
\begin{align*}
 \int_a^b(b-t)m_0(t)\,\dd t
 ={}&\beta(b)-\gamma(a)-(b-a)\gamma'(a)\\
 &+\gamma(J)-\beta(J)
 +(b-J)\bigl(\gamma'(J)-\beta'(J)\bigr),
\end{align*}
and the last two terms vanish.

We record explicitly why the two identities
\eqref{eq:smoothing-moment-zero}--\eqref{eq:smoothing-moment-one}
must be preserved in the smoothing procedure. Given a smooth function
\(m\), reconstruct a profile from the left endpoint by setting
\begin{equation}\label{eq:smoothing-reconstruction}
 f_m(x)=\gamma(a)+(x-a)\gamma'(a)
 +\int_a^x(x-t)m(t)\,\dd t.
\end{equation}
Then
\[
 f_m'(x)=\gamma'(a)+\int_a^x m(t)\,\dd t,
 \qquad
 f_m''(x)=m(x).
\]
Consequently,
\begin{align}
 f_m'(b)-\beta'(b)
 &=\int_a^b\bigl(m-m_0\bigr)(t)\,\dd t,
 \label{eq:smoothing-slope-error}\\
 f_m(b)-\beta(b)
 &=\int_a^b(b-t)\bigl(m-m_0\bigr)(t)\,\dd t.
 \label{eq:smoothing-value-error}
\end{align}
If, in addition, \(m=\beta''\) near \(b\), then
\(f_m-\beta\) is affine there. It therefore vanishes near \(b\) if
and only if the right-hand sides of
\eqref{eq:smoothing-slope-error} and
\eqref{eq:smoothing-value-error} both vanish. Thus the two moment
conditions are precisely what is needed to recover both the value and
the slope of \(\beta\) at the right endpoint.

We now construct a smooth second derivative with these two moments.
Choose \(\vartheta\in C^\infty(\mathbb R)\) such that
\[
 0\leq\vartheta\leq1,\qquad
 \vartheta=0\quad\text{on }(-\infty,0],\qquad
 \vartheta=1\quad\text{on }[1,\infty),
\]
and set
\[
 \widetilde m_\varepsilon(t)=
 \begin{cases}
  \gamma''(t),&t\leq J,\\[1mm]
  \gamma''(t)+
  \vartheta\!\left(\dfrac{t-J}{\varepsilon}\right)
  \bigl(\beta''(t)-\gamma''(t)\bigr),&t\geq J.
 \end{cases}
\]
Then \(\widetilde m_\varepsilon\in C^\infty([a,b])\),
\[
 \widetilde m_\varepsilon\leq\gamma'',
\]
and \(\widetilde m_\varepsilon\) agrees with \(\gamma''\) near \(a\)
and with \(\beta''\) on \([J+\varepsilon,b]\). Define its two moment
errors by
\begin{align*}
 E_{0,\varepsilon}
 &=\int_a^b
   \bigl(\widetilde m_\varepsilon-m_0\bigr)(t)\,\dd t,\\
 E_{1,\varepsilon}
 &=\int_a^b(b-t)
   \bigl(\widetilde m_\varepsilon-m_0\bigr)(t)\,\dd t.
\end{align*}
Since \(\widetilde m_\varepsilon-m_0\) is uniformly bounded and
supported in \([J,J+\varepsilon]\), one has
\begin{equation}\label{eq:smoothing-errors}
 |E_{0,\varepsilon}|+|E_{1,\varepsilon}|
 \leq C\varepsilon.
\end{equation}

By the strict inequality assumption, there exist two ordered compact
intervals $ I_1, I_2 \Subset(J,b)$  such that \(I_1\) lies strictly to the left of \(I_2\)
and a constant \(\delta>0\) such that
\begin{equation}\label{eq:smoothing-gap}
 \beta''\leq\gamma''-3\delta
 \quad\text{on }I_1\cup I_2.
\end{equation}
Choose nonnegative functions
\[
 \varphi_i\in C_c^\infty(I_i),\qquad
 \int_a^b\varphi_i(t)\,\dd t=1,
\]
and write
\[
 \mu_i=\int_a^b t\varphi_i(t)\,\dd t.
\]
Since  \(I_1\) lies strictly to the left of \(I_2\), we have \(\mu_1<\mu_2\). We seek
\[
 m_\varepsilon
 =\widetilde m_\varepsilon
 +c_1(\varepsilon)\varphi_1
 +c_2(\varepsilon)\varphi_2.
\]
The two required moment identities 
\begin{align*}
    \int_a^b m_{\epsilon}(t) dt &=\int_a^b m_{0}(t) dt\\
    \int_a^b (b-t)m_{\epsilon}(t) dt &=\int_a^b (b-t)m_{0}(t) dt
\end{align*}
 are equivalent to
\begin{equation}\label{eq:smoothing-linear-system}
 \begin{pmatrix}
  1&1\\
  b-\mu_1&b-\mu_2
 \end{pmatrix}
 \begin{pmatrix}
  c_1(\varepsilon)\\
  c_2(\varepsilon)
 \end{pmatrix}
 =-
 \begin{pmatrix}
  E_{0,\varepsilon}\\
  E_{1,\varepsilon}
 \end{pmatrix}.
\end{equation}
The determinant of this matrix is
\(\mu_1-\mu_2\neq0\). More explicitly, the unique choice is
\begin{align}
 c_1(\varepsilon)
 &=
 \frac{(b-\mu_2)E_{0,\varepsilon}-E_{1,\varepsilon}}
 {\mu_2-\mu_1},
 \label{eq:smoothing-c1}\\
 c_2(\varepsilon)
 &=
 \frac{E_{1,\varepsilon}-(b-\mu_1)E_{0,\varepsilon}}
 {\mu_2-\mu_1}.
 \label{eq:smoothing-c2}
\end{align}
Hence \(m_\varepsilon\) has exactly the two moments of \(m_0\).
Moreover, \eqref{eq:smoothing-errors} and
\eqref{eq:smoothing-c1}--\eqref{eq:smoothing-c2} yield
\[
 |c_1(\varepsilon)|+|c_2(\varepsilon)|
 \leq C\varepsilon.
\]

We finally choose \(\varepsilon>0\) sufficiently small that
\[
 J+\varepsilon<\inf I_1,\qquad
 |c_i(\varepsilon)|\|\varphi_i\|_{L^\infty}\leq\delta
 \quad (i=1,2).
\]
On \(I_i\), the supports are disjoint and
\(\widetilde m_\varepsilon=\beta''\); hence
\eqref{eq:smoothing-gap} gives
\[
 m_\varepsilon
 \leq\beta''+\delta
 \leq\gamma''-2\delta.
\]
Outside the two bump supports,
\[
 m_\varepsilon=\widetilde m_\varepsilon\leq\gamma''.
\]
It follows that
\begin{equation}\label{eq:smoothing-concavity}
 m_\varepsilon\leq\gamma''=-\kappa<0
 \quad\text{on }[a,b].
\end{equation}

Fix such an \(\varepsilon\), write \(m=m_\varepsilon\), and define
\(f=f_m\) by \eqref{eq:smoothing-reconstruction}. Then \(f''=m<0\).
Since \(m=\gamma''\) near \(a\), the initial conditions in
\eqref{eq:smoothing-reconstruction} imply that \(f=\gamma\) there.
The restored moments and
\eqref{eq:smoothing-slope-error}--\eqref{eq:smoothing-value-error}
give
\[
 f'(b)=\beta'(b),\qquad f(b)=\beta(b).
\]
Since \(m=\beta''\) near \(b\), it follows that \(f=\beta\) near \(b\).
Finally,
\[
 f(x)-\gamma(x)
 =\int_a^x(x-t)\bigl(m(t)-\gamma''(t)\bigr)\,\dd t
 \leq0
\]
by \eqref{eq:smoothing-concavity}. Thus \(f\leq\gamma\) on
\([a,b]\), as required.
\end{proof}

We next verify that the smoothing may be performed on neighborhoods
whose size is independent of \(q\).  We give the argument at the right
junction; the left one is identical after reflection.  In the translated
coordinate \(z=x-J_q\), one has, uniformly for \(z\) in a fixed bounded
interval,
\[
 \gamma_q(J_q+z)=\frac{\pi}{2}+O(q),\qquad
 \gamma_q'(J_q+z)=O(q^{9/4}),\qquad
 \gamma_q''=-2q^7.
\]
Consequently,
\[
 Y_+\longrightarrow\frac{\pi}{2},\qquad
 m_+\longrightarrow0,\qquad
 R_+\longrightarrow\frac{\pi}{2}.
\]
Choose a fixed \(0<\sigma<1\) sufficiently small.  For all sufficiently
small \(q\), the circular graph \(\beta_+\) is defined on
\([J_q,J_q+\sigma]\), and its explicit second derivative gives
\[
 -C_*\leq\beta_+''\leq-c_*<0
 \quad\text{on }[J_q,J_q+\sigma],
\]
where \(c_*,C_*\) are independent of \(q\).  Hence, after decreasing
\(q\) if necessary,
\[
 \beta_+''\leq\gamma_q''-\frac{c_*}{2}
 \quad\text{on }[J_q,J_q+\sigma].
\]
Apply Lemma~\ref{lem:smoothing} on
\([J_q-\sigma,J_q+\sigma]\).  In its proof, choose the two bump
intervals by translating two fixed disjoint subintervals of
\((0,\sigma)\).  Their separation, the inverse of the moment matrix,
the \(C^0\)-bounds for the second derivatives, and the strict gap are
then all uniform in \(q\).  Thus the smoothing parameter
\(\varepsilon\) can be chosen independently of \(q\).

It remains to check that the smoothed half-width stays positive.
Put \(a=J_q-\sigma\) and \(b=J_q+\sigma\).  The interpolation agrees
with \(\gamma_q\) near \(a\) and with \(\beta_+\) near \(b\).  For the
fixed sufficiently small \(\sigma\) chosen above, both
\(\gamma_q(a)\) and \(\beta_+(b)\) have a positive lower bound
\(c_\sigma\) independent of small \(q\).  Since \(f_m''<0\), concavity
gives
\[
 f_m((1-t)a+tb)
 \geq(1-t)f_m(a)+tf_m(b)
 \geq c_\sigma
 \qquad
 (0\leq t\leq1).
\]
Thus the interpolation remains strictly above the symmetry axis.
The unchanged circular cap is positive before its terminal point.
Applying the same construction at the left junction and reflecting
across \(y=0\), we obtain an
interval \((\alpha_q,\beta_q)\) and a positive concave half-width
\(f_q\) such that
\begin{equation}\label{eq:domain-graph}
 \Om_q
 =\{(x,y):\alpha_q<x<\beta_q,\ |y|<f_q(x)\}
\end{equation}
is bounded and smooth, and
\begin{align}
 \Om_q\cap\{|x|\leq L_q\}
 &=\{(x,y):|x|\leq L_q,\ |y|<\gamma_q(x)\},
 \label{eq:central-domain}\\
 \Om_q&\subset
 \{(x,y):|x|\leq\Lambda_q,\ |y|<\gamma_q(x)\}.
 \label{eq:domain-inclusion}
\end{align}
At each terminal point the upper and lower arcs belong to the same
circle and therefore join smoothly.

For completeness, the concavity of \(f_q\) gives the convexity of
\(\Om_q\) directly.  If \(P_i=(x_i,y_i)\in\Om_q\), \(i=0,1\), and
\(0<s<1\), then
\begin{align*}
 |(1-s)y_0+sy_1|
 &\leq(1-s)|y_0|+s|y_1|\\
 &<(1-s)f_q(x_0)+sf_q(x_1)\\
 &\leq f_q\bigl((1-s)x_0+sx_1\bigr).
\end{align*}
Hence the segment joining \(P_0\) and \(P_1\) is contained in
\(\Om_q\).  The unchanged channel arcs, circular caps, and interpolating
arcs all have strictly positive curvature.  Therefore, for each fixed
sufficiently small \(q>0\), smoothness and compactness of
\(\partial\Om_q\) imply
that its curvature has a positive minimum \(\kappa_q>0\).  In the
fixed-\(q\) sense, \(\Om_q\) is uniformly convex.  No lower bound for
\(\kappa_q\) uniform in \(q\) is asserted or needed.

We finally record the localization needed in the boundary comparison.
On the right, the interpolation is contained in
\([J_q-\sigma,J_q+\sigma]\), while the circular cap terminates before
\(J_q+3\).  Since \(\Lambda_q=J_q+4\), every point on a right cap or
interpolation arc satisfies
\[
 \Lambda_q-(4+\sigma)\leq x\leq\Lambda_q.
\]
The reflected construction gives the same estimate on the left.
Consequently, all cap and interpolation parts of
\(\partial\Om_q\) satisfy
\begin{equation}\label{eq:cap-localization}
 \Lambda_q-C_0\leq |x|\leq\Lambda_q
\end{equation}
with \(C_0=4+\sigma\), independent of \(q\).

\begin{figure}[ht]
\centering
\begin{tikzpicture}[
  x=1cm,
  y=1cm,
  line cap=round,
  line join=round,
  >=Stealth,
  channel/.style={draw=ChannelBlue,line width=1.25pt},
  capline/.style={draw=CapRed,line width=1.25pt},
  smoothing/.style={draw=SmoothOrange,line width=2.1pt},
  guide/.style={draw=GuideGray,densely dashed,line width=.55pt},
  axis/.style={draw=black!48,line width=.55pt,-{Stealth[length=2mm]}},
  callout/.style={
    -{Stealth[length=2mm]},
    draw=black!62,
    line width=.55pt
  },
  labelbox/.style={
    fill=white,
    fill opacity=.93,
    text opacity=1,
    inner sep=1.5pt,
    font=\scriptsize
  }
]

%=====================================================================
% (a) Global construction.
% The proportions are deliberately schematic: in the proof,
% J_q is much larger than the O(1) height and cap lengths.
%=====================================================================
\begin{scope}[shift={(0,6.55)}]

  % The completed domain.
  \path[fill=DomainFill,draw=none]
    (-7.10,0)
    .. controls (-7.10,.82) and (-6.48,1.31) .. (-5.82,1.36)
    .. controls (-5.60,1.38) and (-5.38,1.395) .. (-5.12,1.405)
    .. controls (-1.75,1.505) and (2.35,1.575) .. (5.12,1.590)
    .. controls (5.35,1.592) and (5.57,1.600) .. (5.80,1.585)
    .. controls (6.55,1.535) and (7.25,.87) .. (7.25,0)
    .. controls (7.25,-.87) and (6.55,-1.535) .. (5.80,-1.585)
    .. controls (5.57,-1.600) and (5.35,-1.592) .. (5.12,-1.590)
    .. controls (2.35,-1.575) and (-1.75,-1.505) .. (-5.12,-1.405)
    .. controls (-5.38,-1.395) and (-5.60,-1.38) .. (-5.82,-1.36)
    .. controls (-6.48,-1.31) and (-7.10,-.82) .. (-7.10,0)
    -- cycle;

  % Highlight the two short regions modified by smoothing.
  \begin{scope}
    \clip
      (-7.10,0)
      .. controls (-7.10,.82) and (-6.48,1.31) .. (-5.82,1.36)
      .. controls (-5.60,1.38) and (-5.38,1.395) .. (-5.12,1.405)
      .. controls (-1.75,1.505) and (2.35,1.575) .. (5.12,1.590)
      .. controls (5.35,1.592) and (5.57,1.600) .. (5.80,1.585)
      .. controls (6.55,1.535) and (7.25,.87) .. (7.25,0)
      .. controls (7.25,-.87) and (6.55,-1.535) .. (5.80,-1.585)
      .. controls (5.57,-1.600) and (5.35,-1.592) .. (5.12,-1.590)
      .. controls (2.35,-1.575) and (-1.75,-1.505) .. (-5.12,-1.405)
      .. controls (-5.38,-1.395) and (-5.60,-1.38) .. (-5.82,-1.36)
      .. controls (-6.48,-1.31) and (-7.10,-.82) .. (-7.10,0)
      -- cycle;
    \fill[SmoothOrange!17] (-5.90,-1.75) rectangle (-5.02,1.75);
    \fill[SmoothOrange!17] (5.02,-1.75) rectangle (5.90,1.75);
  \end{scope}

  % Upper boundary.
  \draw[capline]
    (-7.10,0)
    .. controls (-7.10,.82) and (-6.48,1.31) .. (-5.82,1.36);
  \draw[smoothing]
    (-5.82,1.36)
    .. controls (-5.60,1.38) and (-5.38,1.395) .. (-5.12,1.405);
  \draw[channel]
    (-5.12,1.405)
    .. controls (-1.75,1.505) and (2.35,1.575) .. (5.12,1.590);
  \draw[smoothing]
    (5.12,1.590)
    .. controls (5.35,1.592) and (5.57,1.600) .. (5.80,1.585);
  \draw[capline]
    (5.80,1.585)
    .. controls (6.55,1.535) and (7.25,.87) .. (7.25,0);

  % Lower boundary: reflection across y=0.
  \draw[capline]
    (7.25,0)
    .. controls (7.25,-.87) and (6.55,-1.535) .. (5.80,-1.585);
  \draw[smoothing]
    (5.80,-1.585)
    .. controls (5.57,-1.600) and (5.35,-1.592) .. (5.12,-1.590);
  \draw[channel]
    (5.12,-1.590)
    .. controls (2.35,-1.575) and (-1.75,-1.505) .. (-5.12,-1.405);
  \draw[smoothing]
    (-5.12,-1.405)
    .. controls (-5.38,-1.395) and (-5.60,-1.38) .. (-5.82,-1.36);
  \draw[capline]
    (-5.82,-1.36)
    .. controls (-6.48,-1.31) and (-7.10,-.82) .. (-7.10,0);

  % Axis, endpoints, and the preserved central interval.
  \draw[axis] (-7.55,0)--(7.65,0) node[right] {\(x\)};
  \fill (-7.10,0) circle[radius=1.25pt];
  \fill (7.25,0) circle[radius=1.25pt];
  \node[below=4pt,font=\small] at (-7.10,0) {\(\alpha_q\)};
  \node[below=4pt,font=\small] at (7.25,0) {\(\beta_q\)};

  \draw[black!32,densely dotted] (-3.85,-1.76)--(-3.85,1.76);
  \draw[black!32,densely dotted] (3.85,-1.76)--(3.85,1.76);
  \node[below=2pt,font=\scriptsize] at (-3.85,-1.76) {\(-L_q\)};
  \node[below=2pt,font=\scriptsize] at (3.85,-1.76) {\(L_q\)};

  % Labels and construction arrows.
  \node[font=\bfseries\small,anchor=west] at (-7.35,3.10)
    {(a) Global closure of the long channel};
  \node[font=\scriptsize,text=black!68,anchor=east] at (7.35,3.10)
    {schematic; not to scale};

  \draw[
    decorate,
    decoration={brace,amplitude=4pt,raise=2pt},
    draw=ChannelBlue
  ]
    (-3.85,1.83)--(3.85,1.83)
    node[midway,above=8pt,align=center,font=\scriptsize,text=ChannelBlue!85!black]
    {unchanged central channel\\
     \(\displaystyle
       \gamma_q(x)=\arccos q+q^{9/4}x-q^7x^2\)};

  \node[font=\large,text=ChannelBlue!68!black] at (0,.38)
    {\(\Omega_q\)};
  \node[font=\scriptsize,text=black!68] at (0,-.10)
    {\(\alpha_q<x<\beta_q,\quad |y|<f_q(x)\)};

  \node[labelbox,align=center] (leftcaptext) at (-6.55,2.02)
    {left circular cap};
  \draw[callout] (leftcaptext.south)
    .. controls (-6.58,1.73) and (-6.45,1.53) .. (-6.28,1.30);

  \node[labelbox,align=center] (rightcaptext) at (6.45,2.12)
    {right circular cap};
  \draw[callout] (rightcaptext.south)
    .. controls (6.48,1.82) and (6.55,1.60) .. (6.61,1.45);

  \node[labelbox,align=center] (smoothtext) at (0,-2.08)
    {two-moment smoothing};
  \draw[callout] (smoothtext.west)
    .. controls (-2.05,-2.15) and (-4.30,-1.79) .. (-5.43,-1.47);
  \draw[callout] (smoothtext.east)
    .. controls (2.05,-2.15) and (4.30,-1.82) .. (5.43,-1.57);

  \node[labelbox,align=center] (reflectiontext) at (-.05,-1.12)
    {lower boundary obtained by\\reflection across \(y=0\)};
  \draw[callout] (reflectiontext.south east)--(1.55,-1.56);
\end{scope}

%=====================================================================
% (b) Enlargement of the right junction.
% The dashed and dotted curves form the original C^1 tangent splice.
% The orange curve represents the final two-moment interpolation.
%=====================================================================
\begin{scope}[shift={(0,-.90)}]

  \node[font=\bfseries\small,anchor=west] at (-7.35,3.06)
    {(b) Enlargement near the right junction};

  % Reference channel.  In local coordinates P_+=(J_q,Y_+) is
  % represented by (-3,2), and the visible slope is exaggerated.
  \draw[draw=ChannelBlue,densely dashed,line width=.85pt]
    plot[domain=-6.65:.10,samples=90,smooth]
    (\x,{2+.125*(\x+3)-.012*(\x+3)*(\x+3)});

  % Reference tangent circle with center C_+=(-2.75,0) and
  % radius sqrt(4.0625).  It is drawn as a true circle.
  \pgfmathsetmacro{\Rcap}{sqrt(4.0625)}
  \draw[draw=CapRed,densely dotted,line width=.95pt]
    ({-2.75+\Rcap*cos(142)},{\Rcap*sin(142)})
    arc[start angle=142,end angle=0,radius=\Rcap];

  % Highlight the transition interval [a,b].
  \fill[SmoothOrange!12] (-3.45,.20) rectangle (-2.45,2.32);
  \draw[black!35,densely dashed] (-3.45,.16)--(-3.45,2.32);
  \draw[black!35,densely dashed] (-3.00,.16)--(-3.00,2.32);
  \draw[black!35,densely dashed] (-2.45,.16)--(-2.45,2.32);

  % Final profile f_q: channel near a, smooth interpolation on [a,b],
  % and circular cap near b.  The interpolation is schematic.
  \draw[channel]
    plot[domain=-6.65:-3.45,samples=45,smooth]
    (\x,{2+.125*(\x+3)-.012*(\x+3)*(\x+3)});

  \draw[smoothing]
    (-3.45,1.9413)
    .. controls (-3.30,1.9617) and (-3.15,1.9752) .. (-3.00,1.9885)
    .. controls (-2.8167,2.0047) and (-2.6333,2.0207) .. (-2.45,1.9931);

  \pgfmathsetmacro{\angleb}{atan2(1.9931,.30)}
  \draw[capline]
    ({-2.75+\Rcap*cos(\angleb)},{\Rcap*sin(\angleb)})
    arc[start angle=\angleb,end angle=0,radius=\Rcap];

  % Tangency point of the unsmoothed profiles and the circle center.
  \filldraw[fill=white,draw=black,line width=.6pt]
    (-3,2) circle[radius=1.45pt];
  \filldraw[fill=white,draw=black,line width=.6pt]
    (-2.75,0) circle[radius=1.45pt];
  \fill (-.7344,0) circle[radius=1.2pt];
  \draw[guide] (-2.75,0)--(-3,2)
    node[midway,right=2pt,labelbox] {\(R_+\)};

  % Local axis and labels.
  \draw[axis] (-6.80,0)--(-.38,0) node[right] {\(x\)};
  \node[below left=3pt,font=\scriptsize] at (-3.45,0) {\(a\)};
  \node[above left=2pt,font=\scriptsize] at (-2.90,0) {\(J_q\)};
  \node[below right=3pt,font=\scriptsize] at (-2.40,0) {\(b\)};
  \node[below=13pt,font=\scriptsize] at (-2.75,0) {\(c_+\)};
  \node[below=3pt,font=\scriptsize] at (-.7344,0) {\(\beta_q\)};

  \node[labelbox,anchor=west,text=ChannelBlue!85!black]
    at (-6.45,1.49) {channel \(\gamma_q\)};
  \node[labelbox,anchor=west,text=CapRed!90!black]
    at (-1.85,1.13) {tangent circle \(\beta_+\)};

  \node[labelbox,align=left] (tangenttext) at (-4.90,2.43)
    {\(P_+=(J_q,Y_+)\):\\same value and slope};
  \draw[callout] (tangenttext.south east)--(-3.04,2.04);

  \node[labelbox,align=center] (finaltext) at (-4.85,.63)
    {final upper boundary\\\(y=f_q(x)\)};
  \draw[callout] (finaltext.north east)
    .. controls (-4.42,1.05) and (-3.76,1.64) .. (-3.39,1.95);

  \node[
    labelbox,
    text=SmoothOrange!78!black,
    align=center
  ] (localSmooth) at (-2.55,2.49)
    {two-moment\\smoothing};
  \draw[callout,draw=SmoothOrange!85!black]
    (localSmooth.south west)--(-2.78,2.01);

  % Exact formulas used in the construction.
  \node[
    anchor=west,
    align=left,
    text width=6.25cm,
    rounded corners=2pt,
    draw=black!18,
    fill=black!2,
    inner sep=7pt,
    font=\scriptsize
  ] at (1.05,1.27) {%
    Before smoothing, set
    \[
      Y_+=\gamma_q(J_q),\qquad
      m_+=\gamma_q'(J_q).
    \]
    The tangent circle has
    \[
      c_+=J_q+m_+Y_+,\qquad
      R_+=Y_+\sqrt{1+m_+^2},
    \]
    and its terminal point is
    \[
      \beta_q=c_++R_+.
    \]
    On the transition interval,
    \[
      f_q=\gamma_q\ \text{near }a,\qquad
      f_q=\beta_+\ \text{near }b,
    \]
    \[
      f_q''<0,\qquad f_q\leq\gamma_q.
    \]
  };

\end{scope}

\end{tikzpicture}

\caption{Schematic construction of \(\Omega_q\).}
\label{fig:omega-q-construction}
\end{figure}

\subsection{The zero-Dirichlet solution}

The preceding construction gives the domain before any comparison
argument is used.  We now solve the boundary value problem on that
domain.

\begin{proposition}\label{prop:existence}
For every sufficiently small \(q\), there is a unique function
\[
 u_q\in C^\infty(\Om_q)\cap C(\overline{\Om_q})
\]
such that
\begin{equation}\label{eq:uq-problem}
 \begin{cases}
 \Qop[u_q]=1&\text{in }\Om_q,\\
 u_q=0&\text{on }\partial\Om_q.
 \end{cases}
\end{equation}
\end{proposition}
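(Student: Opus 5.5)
The plan is to reduce Proposition~\ref{prop:existence} to the known existence and uniqueness theorem for the translating mean curvature equation on smooth bounded mean convex domains, \cite[Theorem~C.2]{Zhou2024}, as quoted in the introduction. Thus the real work is to confirm that \(\Om_q\) meets the hypotheses of that theorem: it should be bounded, have \(C^\infty\) boundary, and be mean convex.

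\textbf{Boundedness and smoothness.} Boundedness is immediate from \eqref{eq:domain-inclusion}, since \(|x|\leq\Lambda_q\) and \(|y|<\gamma_q(x)<\pi/2\). For smoothness, observe that \(\partial\Om_q\) is made of four kinds of pieces:
\begin{itemize}
\item the graphs \(y=\pm f_q(x)\) over \((\alpha_q,\beta_q)\), where \(f_q\) is \(C^\infty\) by Lemma~\ref{lem:smoothing};
\item two terminal arcs.
\end{itemize}
Near each terminal point, \(f_q\) agrees with a circular graph such as \(\beta_+\). The upper and lower arcs there lie on the same circle, so the boundary is locally a circle arc and hence smooth. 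This holds because the smoothing interval \([J_q-\sigma,J_q+\sigma]\) stays a fixed distance away from the terminal point \(c_++R_+\approx J_q+\pi/2\).

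\textbf{Mean convexity.} In the plane, mean convexity is simply nonnegative curvature of the boundary. On the graph pieces the curvature is \(-f_q''/(1+f_q'^2)^{3/2}>0\), and on the circular arcs it equals \(1/R_\pm>0\). Therefore \(\Om_q\) is uniformly convex with curvature at least \(\kappa_q>0\), as already recorded. With all hypotheses checked, Zhou's theorem produces a unique classical solution \(u_q\in C^\infty(\Om_q)\cap C(\overline{\Om_q})\) of \eqref{eq:uq-problem}.

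\textbf{Self-contained alternative.} If one prefers not to rely on the citation, existence and uniqueness can be obtained directly.
\begin{itemize}
\item \emph{Uniqueness.} This follows from the comparison principle for the quasilinear operator \(\Qop-1\). That operator has no zeroth-order term and is elliptic, since \(a^{ij}\) is positive definite; the difference of two solutions therefore satisfies a linear elliptic equation, and the maximum principle applies.
\item \emph{Existence.} Use the Leray--Schauder method of continuity, via Gilbarg--Trudinger Theorem~11.4. This requires a priori \(C^1\) bounds.
\end{itemize}

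\textbf{\(C^1\) bounds.} For the \(C^0\) bound, \(u\leq 0\) follows from the maximum principle. For a lower bound, note that on the strip \(|y|<\pi/2\), which contains \(\Om_q\), the one-dimensional grim reaper \(-\log\cos y+\log\cos(\max\gamma_q)\) is a subsolution-type barrier. This bound is adequate for any fixed \(q\).

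The boundary gradient bound is the expected main obstacle. For the mean curvature–type operator it holds exactly when the boundary curvature satisfies a Serrin-type condition relative to the right-hand side. I would handle it by constructing local barriers at each boundary point from the distance function, in the form \(\psi=-k\log(1+d/\delta)\) or a grim-reaper barrier adapted to the interior tangent disk. The boundary's strict convexity, with \(\kappa_q>0\), makes this work for fixed \(q\).

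The interior gradient bound then follows from the standard maximum principle for \(|Du|^2\), because the equation has no \(x\)-dependence and is a divergence-form equation of mean-curvature type with bounded right-hand side. Higher regularity comes from De Giorgi--Nash and Schauder bootstrapping.

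Since all of this is exactly what \cite[Theorem~C.2]{Zhou2024} packages, the proof in the paper can simply invoke it after the convexity check.
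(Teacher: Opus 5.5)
Your proposal is correct and follows essentially the same route as the paper. Both check that $\Om_q$ is bounded, has $C^\infty$ boundary, and has strictly positive boundary curvature (hence is mean convex), and then apply Zhou's Theorem~C.2 with $n=2$ and $\alpha=1$, whose equation coincides with $\Qop[u]=1$.

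The one detail the paper makes explicit that you skip is that the cited theorem gives $u_q\in C^2(\Om_q)\cap C(\overline{\Om_q})$, and $C^\infty(\Om_q)$ then follows from standard interior regularity. Your self-contained sketch (grim-reaper $C^0$ barrier, boundary barriers, gradient maximum principle, Schauder bootstrapping) is sound but not needed.
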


\begin{proof}
The domain \(\Om_q\) is bounded and \(C^\infty\), and its boundary
curvature is strictly positive.  In particular, it is mean convex in
the outward-normal convention used in
\cite[Theorem~C.2]{Zhou2024}.  Taking \(n=2\) and \(\alpha=1\) in that
theorem, its translating mean curvature equation is precisely
\[
 \operatorname{div}\!\left(
 \frac{Du}{\sqrt{1+|Du|^2}}
 \right)
 =\frac{1}{\sqrt{1+|Du|^2}}.
\]
The cited existence and uniqueness result therefore gives
\(u_q\in C^2(\Om_q)\cap C(\overline{\Om_q})\).  Standard interior
regularity gives \(u_q\in C^\infty(\Om_q)\).
\end{proof}

\section{Comparison with the corrected profile}
\label{sec:comparison}
 In this  section, we use the comparison principle to detect the nonconvexity of the level set for the solution in section 3.
 
\subsection{The comparison principle}

\begin{lemma}[Comparison principle]\label{lem:comparison}
Let \(D\Subset\R^2\) be bounded, and let
\(u,v\in C^2(D)\cap C(\overline D)\).  If
\[
 \Qop[u]\geq\Qop[v]\quad\text{in }D,
 \qquad
 u\leq v\quad\text{on }\partial D,
\]
then \(u\leq v\) in \(D\).
\end{lemma}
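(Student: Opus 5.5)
We argue by contradiction, in the standard way for quasilinear operators without zeroth-order dependence on \(u\). Set \(w=u-v\) and suppose \(M:=\max_{\overline D}w>0\). This maximum is attained, since \(w\in C(\overline D)\) and \(\overline D\) is compact. Because \(w\leq0\) on \(\partial D\), every maximum point lies in the open set \(D\). The aim is to show that the linearized operator satisfied by \(w\) forbids an interior positive maximum. To obtain strict inequalities, we first perturb \(v\).

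\emph{Step 1 (linearization).} Write
\[
 \Qop[u]-\Qop[v]=a^{ij}(Du)w_{ij}+\bigl(a^{ij}(Du)-a^{ij}(Dv)\bigr)v_{ij}.
\]
The second term equals \(b^k w_k\), where
\[
 b^k(x)=\int_0^1\partial_{p_k}a^{ij}\bigl(Dv+s(Du-Dv)\bigr)\dd s\,v_{ij}.
\]
The coefficients \(b^k\) are continuous in \(D\), though possibly unbounded near \(\partial D\). The matrix \(a^{ij}(Du)\) is positive definite. Therefore \(Lw:=a^{ij}(Du)w_{ij}+b^kw_k\geq0\) in \(D\).

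\emph{Step 2 (strictness by perturbation).} Unbounded coefficients near the boundary make a global strong maximum principle delicate, so we localize. Fix \(0<\delta<M\) and let \(D_\delta\) be the connected component of \(\{w>\delta\}\) that contains a maximum point. Its closure is a compact subset of \(D\), because \(w\leq0\) on \(\partial D\) and \(w\) is continuous. On a neighbourhood \(K\Subset D\) of \(\overline{D_\delta}\), the functions \(Du\), \(Dv\) and \(D^2v\) are bounded. Hence \(a^{ij}(Du)\) is uniformly elliptic on \(K\) and the \(b^k\) are bounded there. We then apply the strong maximum principle (Hopf) for \(L\) on \(D_\delta\). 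Since \(w\) attains an interior maximum \(M\) in \(D_\delta\), \(w\equiv M\) on \(D_\delta\). But then \(D_\delta\) is both open and relatively closed in \(\{w>\delta\}\). Consequently \(w=M\) on \(\partial D_\delta\), whereas \(w=\delta<M\) there, which is a contradiction.

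A more elementary alternative avoids Hopf: compare with \(v_\epsilon=v+\epsilon(e^{\mu x_1}-C)\) on \(K\), choosing \(\mu\) large relative to the bounds of the coefficients. Then \(L(w-\epsilon e^{\mu x_1})>0\) at an interior maximum of \(w-\epsilon e^{\mu x_1}\) on \(\overline{D_\delta}\), which is impossible because there \(Dw=0\) and \(a^{ij}w_{ij}\leq0\). For small \(\epsilon\) this maximum is interior, since the boundary values \(\delta\) lie below \(M-O(\epsilon)\).

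The main obstacle is this localization. The coefficients \(a^{ij}(Du)\) may degenerate and \(D^2v\) may blow up as \(\partial D\) is approached, because only \(C^2(D)\cap C(\overline D)\) is assumed. Restricting to a superlevel set \(\{w>\delta\}\) compactly contained in \(D\) resolves this, and the remaining steps are routine.
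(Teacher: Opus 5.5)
Your main argument is correct and is essentially the paper's proof: you linearize \(\Qop[u]-\Qop[v]\) into an operator with no zeroth-order term, pass to the component of a superlevel set \(\{w>\delta\}\), which is compactly contained in \(D\) and on which the coefficients are bounded and uniformly elliptic, and then apply the strong maximum principle. Placing \(a^{ij}(Du)\) in the principal part instead of the paper's averaged \(A^{ij}\) makes no difference.

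In the optional ``elementary'' variant the sign is reversed. Since \(L(e^{\mu x_1})=(a^{11}\mu^2+b^1\mu)e^{\mu x_1}>0\) for large \(\mu\), you should work with \(w+\varepsilon e^{\mu x_1}\), i.e. \(v_\varepsilon=v-\varepsilon e^{\mu x_1}\). At an interior maximum it is the gradient of this perturbed function, not \(Dw\), that vanishes.
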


\begin{proof}
Set \(z=u-v\) and \(u_t=v+tz\).  Linearization along this segment gives
\[
 \Qop[u]-\Qop[v]
 =A^{ij}z_{ij}+B^kz_k,
\]
where
\[
 A^{ij}=\int_0^1a^{ij}(Du_t)\,\dd t,
 \qquad
 B^k=\int_0^1
 \partial_{p_k}a^{ij}(Du_t)(u_t)_{ij}\,\dd t.
\]
Suppose that \(m=\max_{\overline D}z>0\).  Since \(z\leq0\) on
\(\partial D\), the set
\[
 D_m=\{z>m/2\}
\]
is compactly contained in \(D\).  On every component of \(D_m\), the
coefficients \(A^{ij}\) and \(B^k\) are bounded, and \(A^{ij}\) is
uniformly positive definite.  The strong maximum principle excludes
an interior maximum equal to \(m\), a contradiction.
\end{proof}

\subsection{Direct differential barriers}

The function used to move the corrected profile in the transverse
direction is
\begin{equation}\label{eq:Psi-def}
 \Psi_q(x,y)=\Acal(\gamma_q(x))-\Acal(y).
\end{equation}
It satisfies
\begin{equation}\label{eq:Psi-sign}
 \Psi_q>0\quad\text{in the interior of }\mathcal C_q,
 \qquad
 \Psi_q=0\quad\text{on }y=\pm\gamma_q(x).
\end{equation}

\begin{theorem}
\label{thm:direct-barriers}
There are constants \(K>1\) and \(\varepsilon_*>0\) with the following
property.  Fix \(0<\varepsilon_0\leq\varepsilon_*\) and \(C_E>0\), and
put
\begin{equation}\label{eq:barrier-parameters}
 a_q=Kq^3,\qquad
 e_q(x)=C_Eq|\log q|\,
 \frac{\cosh(\varepsilon_0qx)}
 {\cosh(\varepsilon_0q\Lambda_q)}.
\end{equation}
Then, for all sufficiently small \(q\), depending on
\(\Theta,K,\varepsilon_0,C_E\),
\begin{equation}\label{eq:direct-barriers}
 \Qop[V_q-a_q\Psi_q]\geq1,\qquad
 \Qop[V_q+(a_q+e_q)\Psi_q]\leq1
\end{equation}
throughout \(\mathcal C_q\).
\end{theorem}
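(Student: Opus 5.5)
Since \(\Fop[u]=(1+|Du|^2)(\Qop[u]-1)\) by \eqref{eq:FQ}, it suffices to prove \(\Fop[V_q-a_q\Psi_q]\geq0\) and \(\Fop[V_q+(a_q+e_q)\Psi_q]\leq0\) on \(\mathcal C_q\). For \(W=V_q+b\Psi_q\), with \(b=b(x)\) equal to \(-a_q\) or \(a_q+e_q\), we expand
\[
 \Fop[W]=\Fop[V_q]+b\,\mathcal L_{V_q}\Psi_q+\mathcal R,
\]
where \(\mathcal L_{V_q}\) is the linearization of \(\Fop\) at \(V_q\), and \(\mathcal R\) collects the terms quadratic or cubic in \(b\Psi_q\) together with the terms involving \(b_x,b_{xx}\). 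The first step is the weighted residual bound
\[
 |\Fop[V_q]|\leq Cq^3\sec^2y .
\]
To prove it, substitute \eqref{eq:Vdef} into \eqref{eq:Fdef}, using \(\gamma_q'=\lambda-2q^7x\) and \(\gamma_q''=-2q^7\). By \eqref{eq:corrector-ode} the \(O(\tau)\) terms cancel, up to the error coming from \((\gamma_q')^2-\tau=O(\lambda q^7\Lambda_q)\). What remains is of order \(\tau^2\) times powers of \(\tan\gamma_q\sim q^{-1}\) and of \(\sec y\), plus the \(\gamma_q''\) term \(q^7\,\partial_\gamma(\cdot)\). Near the boundary one has \(\sec^2\gamma\sim q^{-2}\), so each surviving term has to be checked against \(q^3\sec^2 y\). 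The scale \(\lambda=q^{9/4}\) gives \(\tau T^2\sim q^{5/2}\), which makes the remaining terms small.

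The key computation is the sign of \(\mathcal L_{V_q}\Psi_q\). The dominant part of \(\mathcal L_{V_q}\) is the one-dimensional operator \((1+V_y^2)\partial_{yy}-2V_y\partial_y\), with \(V_y\approx\tan y\). Since \(\Acal'=s\sec^2s\), one gets \(\Acal''=\sec^2s+2s\sec^2s\tan s\), and therefore
\[
 -(\sec^2y)\Acal''(y)+2\tan y\,\Acal'(y)=-\sec^4y\ \ (\text{up to lower order}) .
\]
In fact \(\Acal\) is exactly designed so that this expression equals \(-\sec^2 y\cdot\sec^2y\) plus terms that cancel. Hence
\[
 \mathcal L_{V_q}\Psi_q\leq -c\sec^2y\quad\text{(indeed }\lesssim-\sec^4y\text{)},
\]
with corrections of order \(\tau\sec^2\gamma\cdot\sec^2y\) from the \(x\)-derivatives of \(\Acal(\gamma_q)\), which are relatively small. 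Choosing \(K\) large then gives \(b\,\mathcal L\Psi_q\) of size at least \(Kq^3c\sec^2y\), which dominates \(Cq^3\sec^2y\) and has the correct sign in both cases. This is where \(K\) is fixed.

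The main obstacle I expect is controlling \(\mathcal R\) uniformly up to \(y=\pm\gamma_q\), where \(\sec y\sim q^{-1}\) and \(\Psi_q\) has gradient \(\Acal'(\gamma)\sim q^{-2}\). There \(b\Psi_{q,y}\sim q^3\cdot q^{-2}=q\) is small relative to \(V_y\sim q^{-1}\), so the quadratic terms \((b\Psi_y)^2\Psi_{yy}\)-type are bounded by \(O(q)\) times the linear term. I would verify this by writing everything in terms of \(t=\tan y\) and checking homogeneity in \(t\). For the \(e_q\) term, \(e_q\leq C_Eq|\log q|\) is not of order \(q^3\), but it enters with the favorable sign. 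Its nonlinear effects are \(O(e_q|\log q|)\) relative to the linear term, since \(\Psi_q\lesssim|\log q|\cdot\)stuff, so they stay small for small \(q\). Its \(x\)-derivatives give \(e_q'\sim\varepsilon_0qe_q\) and \(e_q''\sim\varepsilon_0^2q^2e_q\), producing terms like \(e_q''\Psi_q\). These must be absorbed by \(e_q\sec^4y\), and this requires \(\Psi_q\lesssim\sec^2y\cdot O(|\log q|)\) and \(\varepsilon_0\leq\varepsilon_*\) small. I would first check the worst region, near the center \(y\approx0\), where \(\Psi_q\sim\Acal(\gamma)\sim q^{-1}\) while \(\sec^4y\sim1\). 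There \(\varepsilon_0^2q^2\cdot q^{-1}\ll1\) suffices, which is how \(\varepsilon_*\) is fixed.
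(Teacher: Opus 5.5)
Your reduction to the sign of \(\Fop\) and your outline of the residual bound \(|\Fop[V_q]|\le Cq^3\sec^2y\) follow the paper. The perturbation step, however, has a genuine gap, and it also rests on a miscomputed linearization. In \(\Fop\) the coefficient of \(u_{yy}\) is \(1+u_x^2\), while \(1+u_y^2\) multiplies \(u_{xx}\). At the grim-reaper jet the linearization is therefore \(\sec^2y\,\partial_{xx}+\partial_{yy}-2\tan y\,\partial_y\), which is the operator of the corrector equation. Since \(\Acal''-2\tan y\,\Acal'=\sec^2y\) exactly, the linear term is \(-b\sec^2y\) up to small \(x\)-derivative corrections, not \(\lesssim-b\sec^4y\). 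Your own expression actually equals \(-\sec^4y-2y\tan^3y\sec^2y\), so nothing ``cancels''. The bound \(\le-c\sec^2y\) that you finally use is the correct one, and it suffices for the lower barrier: there \(a_qy^2\sec^2y\le CKq\), so all nonlinear terms really are perturbative. But every absorption you planned against \(e_q\sec^4y\) has to be redone against \(e_q\sec^2y\).

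The real failure is in the upper barrier, in the region where \(e_q\) is large. Take \(|x|\) near \(\Lambda_q\), which is exactly where \(e_q\approx C_Eq|\log q|\) is needed for the boundary ordering on the caps, and \(|y|\) near \(\gamma_q\). There the perturbation gradient \(|s(\Psi_q)_y|=s|y|\sec^2y\) has size \(|\log q|/q\), larger than \((V_q)_y\approx\tan y\sim q^{-1}\), so \(s\Psi_q\) is not \(C^1\)-small. The quadratic term \(-(s(\Psi_q)_y)^2=-s^2y^2\sec^4y\) exceeds the linear term \(s\sec^2y\) by the factor \(sy^2\sec^2y\sim|\log q|/q\). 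Your claim that the nonlinear effects of \(e_q\) are \(O(e_q|\log q|)\) relative to the linear term is therefore false; note also that \(\Psi_q\) is of size \(q^{-1}\), not \(|\log q|\). Consequently \(\mathcal R\) cannot be bounded by a small multiple of \(b\sec^2y\).

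What is missing is the exact expansion
\[
 \Fop[V_q+s\Psi_q]=\Fop[V_q]-s\sec^2y-s^2y^2\sec^4y+s''\Psi_q(1+Z^2)+(s')^2B_s+\mathcal E_s,
 \qquad Z=\tan y-sy\sec^2y,
\]
with an explicit coefficient \(B_s\) and a remainder \(\mathcal E_s\). In this expansion the dominant quadratic term has the favorable sign for the supersolution. You must isolate it, and then bound every remaining term by a small multiple of \(|s|\sec^2y+s^2y^2\sec^4y\), not of \(|s|\sec^2y\) alone. This includes the \(s''\)-term, the \((s')^2\)-term and all of \(\mathcal E_s\). That two-term bookkeeping is what the paper does. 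It is also where the paper uses \(\varepsilon_0\le\varepsilon_*\): it absorbs \((e_q')^2(B_{s_q})_+\le C\varepsilon_0^2s_q^2y^2\sec^4y\) into \(-s_q^2y^2\sec^4y\). The central term \(e_q''\Psi_q\) that you point to is only \(O(\varepsilon_0^2q)\) relative to the two main terms, so it is harmless for small \(q\) and does not fix \(\varepsilon_*\).
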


\begin{proof}
We begin with the two signs in \eqref{eq:direct-barriers} and reduce
them directly to the estimates that will be verified below.  At a
point of \(\mathcal C_q\), write
\[
 \gamma=\gamma_q(x),\qquad
 t=\tan y,\quad S=\sec^2y,\qquad
 T=\tan\gamma,\quad\Sigma=\sec^2\gamma,
\]
and set
\[
 X=|y|S.
\]
For a function \(s=s(x)\), put
\[
 Z=t-syS
\]
and
\begin{equation}\label{eq:Bs-direct}
 B_s
 =2\Psi_qZyS+\Psi_q^2
 \bigl[S(1-s(1+2yt))-1\bigr].
\end{equation}

The exact computation carried out below gives
\begin{align}
 \Fop[V_q+s\Psi_q]
 ={}&\Fop[V_q]-sS-s^2X^2
 +s''\Psi_q(1+Z^2)+(s')^2B_s+\mathcal E_s,
 \label{eq:barrier-master-increment}
\end{align}
and, whenever a fixed \(M\) satisfies
\begin{equation}\label{eq:s-assumptions-direct}
 |s|\leq Mq|\log q|,\qquad
 |s'|\leq Mq|s|,\qquad
 |s''|\leq Mq^2|s|,
\end{equation}
the remainder obeys
\begin{equation}\label{eq:barrier-error-increment}
 |\mathcal E_s|
 \leq C_Mq^{1/4}\bigl(|s|S+s^2X^2\bigr).
\end{equation}
We shall also prove directly that
\begin{equation}\label{eq:residual-direct}
 |\Fop[V_q]|\leq C_Rq^3S.
\end{equation}

Assume these three formulas for the moment.  For the lower barrier,
take \(s=-a_q\).  Then \(s'=s''=0\), and
\begin{align*}
 \Fop[V_q-a_q\Psi_q]
 &\geq
 -C_Rq^3S+a_qS-a_q^2X^2
 -C_Kq^{1/4}(a_qS+a_q^2X^2)\\
 &=a_qS\left[
 1-\frac{C_R}{K}-a_qy^2S
 -C_Kq^{1/4}(1+a_qy^2S)
 \right].
\end{align*}
On the channel,
\[
 a_qy^2S\leq CKq.
\]
Choose \(K\) so large that \(C_R/K\leq1/8\).  With \(K\) fixed,
decrease \(q\) until
\[
 CKq\leq\frac18,\qquad
 C_Kq^{1/4}(1+CKq)\leq\frac18.
\]
It follows that
\begin{equation}\label{eq:lower-F-positive}
 \Fop[V_q-a_q\Psi_q]\geq\frac58a_qS>0.
\end{equation}

For the upper barrier, set
\[
 s_q=a_q+e_q.
\]
Direct differentiation gives
\[
 |e_q'|\leq\varepsilon_0qe_q,\qquad
 e_q''=\varepsilon_0^2q^2e_q.
\]
Since \(s_q\geq0\),
\begin{equation}\label{eq:upper-ZB-direct}
 1+Z^2\leq C(S+s_q^2X^2),\qquad
 (B_{s_q})_+\leq Cq^{-2}X^2.
\end{equation}
Indeed, \(yZ=yt-s_qy^2S\leq yt\), and
\[
 S(1-s_q(1+2yt))-1\leq S-1=t^2.
\]
Using \(0\leq\Psi_q\leq Cq^{-1}\) and
\(|t|\leq X\) in \eqref{eq:Bs-direct} gives the second estimate.
At \(y=0\), one has \(B_{s_q}=-s_q\Psi_q^2\leq0\), so the estimate
also covers the central line.

It follows that
\begin{align}
 e_q''\Psi_q(1+Z^2)
 &\leq C\varepsilon_0^2qe_q(S+s_q^2X^2),
 \label{eq:epp-direct}\\
 (e_q')^2(B_{s_q})_+
 &\leq C\varepsilon_0^2s_q^2X^2.
 \label{eq:ep-direct}
\end{align}
Put
\[
 A=s_qS,\qquad B=s_q^2X^2.
\]
Since \(s_q\geq a_q=Kq^3\) and \(e_q\leq s_q\),
\[
 C_Rq^3S\leq\frac{C_R}{K}A,\qquad
 qe_qS\leq qA,\qquad
 qe_q\,s_q^2X^2=(qe_q)B.
\]
Equations \eqref{eq:barrier-master-increment}--
\eqref{eq:ep-direct} therefore yield
\begin{align}
 \Fop[V_q+s_q\Psi_q]
 \leq{}&
 \left[-1+\frac{C_R}{K}+C_Mq^{1/4}
 +C\varepsilon_0^2q\right]A\notag\\
 &+
 \left[-1+C_Mq^{1/4}+C\varepsilon_0^2
 +C\varepsilon_0^2qe_q\right]B.
 \label{eq:upper-sign-reduction}
\end{align}
After \(K\) has been fixed, choose \(\varepsilon_*>0\) so that
\(C\varepsilon_*^2\leq1/8\).  The already fixed
\(0<\varepsilon_0\leq\varepsilon_*\) then satisfies the same
inequality.  Fix \(C_E\); its size will be chosen later for the
boundary comparison, and choose
\(M=M(K,\varepsilon_0,C_E)\) so that \(s_q\) satisfies
\eqref{eq:s-assumptions-direct}.  Now decrease \(q\) until
\[
 C_Mq^{1/4}\leq\frac18,\qquad
 C\varepsilon_0^2q\leq\frac18,\qquad
 C\varepsilon_0^2qe_q\leq\frac18.
\]
The last condition is possible because
\[
 0\leq qe_q\leq C_Eq^2|\log q|\longrightarrow0.
\]
Both coefficients in \eqref{eq:upper-sign-reduction} are then
negative, and hence
\begin{equation}\label{eq:upper-F-negative}
 \Fop[V_q+s_q\Psi_q]\leq0.
\end{equation}

It remains to establish
\eqref{eq:barrier-master-increment}--\eqref{eq:residual-direct} from the
definitions.  This is done in the next three steps.

\medskip
\noindent\emph{Step 1: the residual of the corrected profile.}
From \eqref{eq:parameters} and \eqref{eq:gamma},
\begin{align*}
 |\gamma_q(x)-\gamma_0|
 &\leq\lambda\Lambda_q+q^7\Lambda_q^2\\
 &\leq\Theta q+O(q^{9/4})+O(q^{9/2}).
\end{align*}
Since
\[
 \frac{\pi}{2}-\gamma_0=\arcsin q=q+O(q^3)
\]
and \(\Theta\) was fixed sufficiently small, it follows that
\begin{equation}\label{eq:channel-scales-direct}
 cq\leq\frac{\pi}{2}-\gamma_q(x)\leq Cq,\qquad
 |T|\leq Cq^{-1},\qquad
 \Sigma\leq Cq^{-2}.
\end{equation}
Because \(|y|\leq\gamma_q(x)\),
\begin{equation}\label{eq:X-scales-direct}
 S\leq Cq^{-2},\qquad
 |t|\leq X\leq CS,\qquad
 y^2S\leq Cq^{-2},\qquad
 q^2X^2\leq CS.
\end{equation}
Here \(|t|\leq X\) follows by integrating \(\sec^2r\) from \(0\) to
\(y\), and the last inequality follows from \(q^2S\leq C\).
Moreover,
\begin{equation}\label{eq:gamma-scales-direct}
 \gamma_q'=q^{9/4}-2q^7x,\qquad
 \gamma_q''=-2q^7,\qquad
 |(\gamma_q')^2-\tau|\leq Cq^8.
\end{equation}

We next differentiate the corrector in \eqref{eq:wdef}.  One obtains
\begin{align}
 w_y&=yS+\frac{T^2}{2}(t+yS),\label{eq:wy-direct}\\
 w_{yy}&=S(1+2yt)+T^2S(1+yt),\label{eq:wyy-direct}\\
 w_{\gamma y}&=T\Sigma(t+yS),\label{eq:wgy-direct}\\
 w_\gamma&=-\gamma\Sigma
 +T\Sigma(yt-\gamma T)
 -\frac{T^2}{2}(T+\gamma\Sigma),\label{eq:wg-direct}\\
 w_{\gamma\gamma}
 &=-\Sigma-2\gamma T\Sigma
 +\Sigma(1+3T^2)(yt-\gamma T)\notag\\
 &\quad
 -2T\Sigma(T+\gamma\Sigma)
 -T^2\Sigma(1+\gamma T).
 \label{eq:wgg-direct}
\end{align}
Equations \eqref{eq:channel-scales-direct}--
\eqref{eq:X-scales-direct} give
\begin{align}
 |w_y|&\leq Cq^{-2}X,
 &|w_{yy}|&\leq Cq^{-3}S,\notag\\
 |w_{\gamma y}|&\leq Cq^{-3}X,
 &|w_\gamma|&\leq Cq^{-4},
 &|w_{\gamma\gamma}|&\leq Cq^{-5}.
 \label{eq:w-bounds-direct}
\end{align}
We shall also use the more balanced consequence of
\eqref{eq:wyy-direct},
\begin{equation}\label{eq:wyy-balanced}
 |w_{yy}|\leq Cq^{-2}(S+X^2).
\end{equation}

Set
\[
 G(\gamma,y)=g(y)-g(\gamma)+\tau w(\gamma,y),
 \qquad V_q(x,y)=G(\gamma_q(x),y).
\]
The chain rule gives
\[
 (V_q)_x=\gamma_q'G_\gamma,\quad
 (V_q)_y=G_y,\quad
 (V_q)_{xx}=(\gamma_q')^2G_{\gamma\gamma}
 +\gamma_q''G_\gamma,
\]
\[
 (V_q)_{xy}=\gamma_q'G_{\gamma y},\qquad
 (V_q)_{yy}=G_{yy}.
\]
Substitution in \eqref{eq:Fdef} yields the exact identity
\begin{align}
 \Fop[V_q]
 ={}&G_{yy}-1-G_y^2\notag\\
 &+(\gamma_q')^2
 \bigl[(1+G_y^2)G_{\gamma\gamma}
 -2G_\gamma G_yG_{\gamma y}
 +G_\gamma^2(G_{yy}-1)\bigr]\notag\\
 &+\gamma_q''(1+G_y^2)G_\gamma.
 \label{eq:F-chain-direct}
\end{align}
Differentiating with respect to $G$ gives:
\begin{equation}\label{eq:G-derivatives-summary}
\begin{aligned}
G_\gamma
   &= -T+\tau w_\gamma,
&\qquad
G_y
   &= t+\tau w_y,\\
G_{\gamma\gamma}
   &= -\Sigma+\tau w_{\gamma\gamma},
&
G_{\gamma y}=G_{y\gamma}
   &= \tau w_{\gamma y},\\
G_{yy}
   &= S+\tau w_{yy}.
\end{aligned}
\end{equation}
First replace \((\gamma_q')^2\) by \(\tau\) and
\(\gamma_q''\) by zero.  The coefficient of \(\tau\) in
\eqref{eq:F-chain-direct} is
\[
 w_{yy}-2tw_y-\Sigma S+T^2t^2.
\]
The explicit formulas above give
\[
 w_{yy}-2tw_y=S+T^2,
\]
while
\[
 \Sigma S-T^2t^2
 =(1+T^2)S-T^2t^2=S+T^2.
\]
Thus the complete term linear in \(\tau\) vanishes.

For brevity in the remaining exact expansion, write
\[
 \alpha=w_\gamma,\quad \beta=w_y,\quad
 A_1=w_{\gamma\gamma},\quad
 C_1=w_{\gamma y},\quad D_1=w_{yy}.
\]

The affine-width residual is
\begin{equation}\label{eq:F-affine-direct}
 \tau^2R_2+\tau^3R_3+\tau^4R_4,
\end{equation}
where
\begin{align}
 R_2={}&
 SA_1-2\Sigma t\beta+2TtC_1+T^2D_1
 -2Tt^2\alpha-\beta^2,\label{eq:R2-direct}\\
 R_3={}&
 -\Sigma\beta^2+2A_1t\beta+2TC_1\beta
 -2tC_1\alpha-2TD_1\alpha+t^2\alpha^2,
 \label{eq:R3-direct}\\
 R_4={}&A_1\beta^2-2C_1\alpha\beta+D_1\alpha^2.
 \label{eq:R4-direct}
\end{align}
Using \eqref{eq:X-scales-direct} and
\eqref{eq:w-bounds-direct} term by term, we obtain
\[
 |R_2|\leq Cq^{-6}S,\qquad
 |R_3|\leq Cq^{-8}S,\qquad
 |R_4|\leq Cq^{-11}S.
\]
For example,
\[
 |\Sigma t\beta|
 \leq Cq^{-4}|t|X\leq Cq^{-5}S,
\qquad
\beta^2 \leq C q^{-4} |y|^2 S^2\leq C q^{-6} S,
\]
and
\[
 |A_1t\beta|
 \leq Cq^{-7}|t|X\leq Cq^{-8}S,
 \qquad 
 |A_1\beta^2|
 \leq Cq^{-9}X^2\leq Cq^{-11}S.
\]
Since \(\tau=q^{9/2}\), \eqref{eq:F-affine-direct} is bounded by
\[
 C\bigl(q^9q^{-6}+q^{27/2}q^{-8}
 +q^{18}q^{-11}\bigr)S
 \leq Cq^3S.
\]

It remains to restore the true width.  Put
\[
 \mathcal B
 =(1+G_y^2)G_{\gamma\gamma}
 -2G_\gamma G_yG_{\gamma y}
 +G_\gamma^2(G_{yy}-1).
\]
The preceding estimates imply
\begin{align*}
 |G_\gamma|&\leq Cq^{-1},
 &|G_{\gamma\gamma}|&\leq Cq^{-2},
 &|G_{\gamma y}|&\leq Cq^{3/2}X,\\
 |G_{yy}|&\leq CS,
 &1+G_y^2&\leq CS.
\end{align*}
Consequently,
\[
 |\mathcal B|\leq Cq^{-2}S,\qquad
 |(1+G_y^2)G_\gamma|\leq Cq^{-1}S.
\]
By \eqref{eq:gamma-scales-direct}, the error made in the affine-width
calculation is at most
\[
 |(\gamma_q')^2-\tau|\,|\mathcal B|
 +|\gamma_q''|\,|(1+G_y^2)G_\gamma|
 \leq Cq^6S.
\]
This proves \eqref{eq:residual-direct}.

\medskip
\noindent\emph{Step 2: the full perturbation, including the
\(x\)-dependence of \(\Psi_q\).}
Since \(\Acal'(r)=r\sec^2r\), direct differentiation of
\eqref{eq:Psi-def} gives
\begin{align}
 (\Psi_q)_y&=-yS,
 &(\Psi_q)_{yy}&=-S(1+2yt),\notag\\
 (\Psi_q)_x&=\gamma_q'\gamma_q\Sigma,
 &(\Psi_q)_{xx}
 &=\gamma_q''\gamma_q\Sigma + \notag(\gamma_q')^2\Sigma
   \bigl(1+2\gamma_q T\bigr).
 \label{eq:Psi-derivatives-direct}
\end{align}
Therefore
\begin{equation}\label{eq:Psi-bounds-direct}
 0\leq\Psi_q\leq Cq^{-1},\qquad
 |(\Psi_q)_x|\leq Cq^{1/4},\qquad
 |(\Psi_q)_{xx}|\leq Cq^{3/2}.
\end{equation}
Indeed, the two contributions to \((\Psi_q)_{xx}\) have sizes
\(O(q^5)\) and \(O(q^{3/2})\), respectively.

For arbitrary smooth functions \(u\) and \(\phi\), direct expansion of
\eqref{eq:Fdef} gives
\begin{align}
 \Fop[u+\phi]-\Fop[u]
 ={}&\phi_{xx}\bigl(1+(u_y+\phi_y)^2\bigr)
 -2\phi_{xy}(u_x+\phi_x)(u_y+\phi_y)\notag\\
 &+\phi_{yy}\bigl(1+(u_x+\phi_x)^2\bigr)\notag\\
 &+(2u_x\phi_x+\phi_x^2)(u_{yy}-1)
 +(2u_y\phi_y+\phi_y^2)(u_{xx}-1)\notag\\
 &-2(u_x\phi_y+u_y\phi_x+\phi_x\phi_y)u_{xy}.
 \label{eq:full-increment-direct}
\end{align}

We apply this identity with \(u=V_q\) and
\(\phi=s\Psi_q\), where \(s=s(x)\).  The required derivatives are
\begin{align*}
 (s\Psi_q)_x&=s'\Psi_q+s(\Psi_q)_x,
 & (s\Psi_q)_y&=-syS,\\
 (s\Psi_q)_{xx}
 &=s''\Psi_q+2s'(\Psi_q)_x+s(\Psi_q)_{xx},
 & (s\Psi_q)_{xy}&=-s'yS,\\
 (s\Psi_q)_{yy}&=-sS(1+2yt).
\end{align*}
Recall that
\[
 Z=t-syS,\qquad X=|y|S,
\]
and
\[
 B_s
 =2\Psi_qZyS
 +\Psi_q^2\bigl[S(1-s(1+2yt))-1\bigr].
\]

To identify the main terms, freeze the derivatives of \(V_q\) at the
grim-reaper jet
\[
 u_x=0,\qquad u_y=t,\qquad
 u_{xx}=u_{xy}=0,\qquad u_{yy}=S,
\]
and retain \(s'\Psi_q\) and \(s''\Psi_q\) as the corresponding
\(x\)-derivatives of \(s\Psi_q\).  The resulting contribution is
\begin{align*}
 &s''\Psi_q(1+Z^2)
 +2(s')^2\Psi_qZyS
 -sS(1+2yt)\bigl[1+(s'\Psi_q)^2\bigr]\\
 &\qquad
 +(s')^2\Psi_q^2(S-1)+2st\,yS-s^2X^2\\
 &=
 s''\Psi_q(1+Z^2)-sS-s^2X^2+(s')^2B_s.
\end{align*}
Thus the signed term \(-sS-s^2X^2\) and the quadratic term
\((s')^2B_s\) arise directly from the frozen grim-reaper calculation.

Adding and subtracting this frozen contribution in
\eqref{eq:full-increment-direct} gives the exact identity
\begin{equation}\label{eq:barrier-master-direct}
 \Fop[V_q+s\Psi_q]
 =\Fop[V_q]-sS-s^2X^2
 +s''\Psi_q(1+Z^2)+(s')^2B_s+\mathcal E_s,
\end{equation}
where
\begin{align}
 \mathcal E_s={}&
 \Bigl\{ \bigl(2s'(\Psi_q)_x+s(\Psi_q)_{xx}\bigr)
 \Bigl(1+\bigl[Z+(V_q)_y-t\bigr]^2\Bigr)\notag\\
 &+s''\Psi_q\bigl((V_q)_y-t\bigr)
 \bigl(2Z+(V_q)_y-t\bigr) \Bigr\}\notag\\[1mm]
 &+2s'yS\Bigl\{
 (V_q)_x\bigl[Z+(V_q)_y-t\bigr]
 +s(\Psi_q)_xZ\notag\\
 &\hspace{31mm}
 +\bigl(s'\Psi_q+s(\Psi_q)_x\bigr)
   \bigl((V_q)_y-t\bigr)
 \Bigr\}\notag\\[1mm]
 &-sS(1+2yt)
 \Bigl\{
 \bigl[(V_q)_x+s'\Psi_q+s(\Psi_q)_x\bigr]^2
 -(s'\Psi_q)^2
 \Bigr\}\notag\\[1mm]
 &+(S-1)\Bigl\{
 2(V_q)_x\bigl(s'\Psi_q+s(\Psi_q)_x\bigr)
 +\bigl(s'\Psi_q+s(\Psi_q)_x\bigr)^2
 -(s'\Psi_q)^2
 \Bigr\}\notag\\
 &+\bigl((V_q)_{yy}-S\bigr)
 \Bigl\{
 2(V_q)_x\bigl(s'\Psi_q+s(\Psi_q)_x\bigr)
 +\bigl(s'\Psi_q+s(\Psi_q)_x\bigr)^2
 \Bigr\}\notag\\[1mm]
 &+(V_q)_{xx}
 \bigl[-2syS(V_q)_y+s^2X^2\bigr]
 +2syS\bigl((V_q)_y-t\bigr)\notag\\[1mm]
 &-2(V_q)_{xy}\Bigl\{
 -syS(V_q)_x
 +(V_q)_y\bigl(s'\Psi_q+s(\Psi_q)_x\bigr)\notag\\
 &\hspace{34mm}
 -syS\bigl(s'\Psi_q+s(\Psi_q)_x\bigr)
 \Bigr\}.
 \label{eq:E-exact-direct}
\end{align}
In particular, all terms involving \((\Psi_q)_x\) or
\((\Psi_q)_{xx}\) are contained explicitly in \(\mathcal E_s\).

We now estimate this remainder.  Step~1 gives
\begin{align}
 |(V_q)_x|&\leq Cq^{5/4},
 &|(V_q)_y-t|&\leq Cq^{5/2}X,\notag\\
 |(V_q)_{xx}|&\leq Cq^{5/2},
 &|(V_q)_{xy}|&\leq Cq^{15/4}X,\notag\\
 |(V_q)_{yy}-S|&\leq Cq^{5/2}(S+X^2).
 \label{eq:base-jet-direct}
\end{align}
Under \eqref{eq:s-assumptions-direct},
\eqref{eq:Psi-bounds-direct} implies
\begin{align}
 |s'\Psi_q|&\leq C_M|s|,
 &|s(\Psi_q)_x|&\leq Cq^{1/4}|s|,\notag\\
 |s''\Psi_q|&\leq C_Mq|s|,
 &|2s'(\Psi_q)_x+s(\Psi_q)_{xx}|
     &\leq C_Mq^{5/4}|s|,\notag\\
 |s'yS|&\leq C_Mq|s|X,
 &|syS|&=|s|X,\notag\\
 |sS(1+2yt)|&\leq C|s|(S+X^2).
 \label{eq:perturbation-jet-direct}
\end{align}
We also have
\begin{align}
 |s'\Psi_q+s(\Psi_q)_x|
 &\leq C_M|s|,\notag\\
 \left|
 \bigl(s'\Psi_q+s(\Psi_q)_x\bigr)^2-(s'\Psi_q)^2
 \right|
 &\leq C_Mq^{1/4}s^2.
 \label{eq:perturbation-square-direct}
\end{align}
Finally,
\[
 |t|\leq X,\qquad t^2\leq S,\qquad
 X^2\leq Cq^{-2}S,
\]
and hence
\[
 1+\bigl[Z+(V_q)_y-t\bigr]^2
 \leq C(S+s^2X^2).
\]

We now record the substitution term by term.  Write
\[
 \mathfrak A=|s|S,\qquad \mathfrak B=s^2X^2,
\]
and decompose \(\mathcal E_s=\sum_{j=1}^8E_j\) according to the eight
successive displayed groups in \eqref{eq:E-exact-direct}: the two
terms in the first braces form \(E_1\), the next braces form \(E_2\),
the following three product lines form \(E_3,E_4,E_5\), the
\((V_q)_{xx}\)-term and the subsequent
\(2syS((V_q)_y-t)\)-term form \(E_6,E_7\), and the final
\((V_q)_{xy}\)-term is \(E_8\).  Equations
\eqref{eq:base-jet-direct}--\eqref{eq:perturbation-square-direct}
give
\begin{equation}\label{eq:E-groups-direct}
 \begin{array}{c|c}
  \text{group}&\text{upper bound}\\ \hline
  E_1&C_Mq^{5/4}(\mathfrak A+\mathfrak B)\\
  E_2&C_Mq^{1/2}(\mathfrak A+\mathfrak B)\\
  E_3&C_Mq^{1/2}(\mathfrak A+\mathfrak B)\\
  E_4&C_Mq^{1/2}(\mathfrak A+\mathfrak B)\\
  E_5&C_Mq^{1/2}(\mathfrak A+\mathfrak B)\\
  E_6&C_Mq^{1/2}(\mathfrak A+\mathfrak B)\\
  E_7&C_Mq^{1/2}\mathfrak A\\
  E_8&C_Mq^{1/2}(\mathfrak A+\mathfrak B)
 \end{array}
\end{equation}
for all sufficiently small \(q\), with the smallness threshold
depending only on \(M\).  Here are the details.  Set
\[
 D=(V_q)_y-t,\qquad
 P=s'\Psi_q+s(\Psi_q)_x,\qquad
 Q=2s'(\Psi_q)_x+s(\Psi_q)_{xx}.
\]
The preceding estimates imply
\[
 |D|\leq Cq^{5/2}X,\quad
 |P|\leq C_M|s|,\quad
 |Q|\leq C_Mq^{5/4}|s|,\quad
 |P^2-(s'\Psi_q)^2|\leq C_Mq^{1/4}s^2,
\]
as well as
\[
 |Z|\leq S^{1/2}+|s|X,\qquad
 |(V_q)_y|\leq CX.
\]
Using these inequalities in the first two groups gives
\begin{align*}
 |E_1|
 &\leq C_Mq^{5/4}|s|(S+\mathfrak B)
   +C_Mq|s|\,q^{5/2}X
    \bigl(S^{1/2}+|s|X+q^{5/2}X\bigr)\\
 &\leq C_Mq^{5/4}(\mathfrak A+\mathfrak B),\\
 |E_2|
 &\leq C_Mq|s|X\Bigl[
   q^{5/4}\bigl(S^{1/2}+|s|X+q^{5/2}X\bigr)\\
 &\hspace{34mm}
   +q^{1/4}|s|\bigl(S^{1/2}+|s|X\bigr)
   +q^{5/2}|s|X\Bigr]\\
 &\leq C_Mq^{1/2}(\mathfrak A+\mathfrak B).
\end{align*}
For the remaining six groups, direct substitution yields
\begin{align*}
 |E_3|
 &\leq C|s|(S+X^2)
   \bigl(q^{5/2}+q^{5/4}|s|+q^{1/4}s^2\bigr),\\
 |E_4|
 &\leq CS\bigl(q^{5/4}|s|+q^{1/4}s^2\bigr),\\
 |E_5|
 &\leq C_Mq^{5/2}(S+X^2)
   \bigl(q^{5/4}|s|+s^2\bigr),\\
 |E_6|
 &\leq Cq^{5/2}\bigl(|s|X^2+\mathfrak B\bigr),\\
 |E_7|
 &\leq Cq^{5/2}|s|X^2,\\
 |E_8|
 &\leq C_Mq^{15/4}X
   \bigl(q^{5/4}|s|X+|s|X+s^2X\bigr).
\end{align*}
Since \(X^2\leq Cq^{-2}S\) and
\(|s|\leq Mq|\log q|\), one has, for sufficiently small \(q\),
\[
 q^{5/2}|s|X^2\leq Cq^{1/2}\mathfrak A,
 \qquad
 q^{1/4}s^2S\leq C_Mq^{1/2}\mathfrak A,
 \qquad
 |s|\leq q^{1/4}.
\]
These three absorptions applied to the last display prove the bounds
for \(E_3,\ldots,E_8\) in \eqref{eq:E-groups-direct}.  Thus every
entry of that table has been verified.  Since
\(q^{1/2}\leq q^{1/4}\) for
\(0<q\leq1\), summing the eight groups gives
\begin{equation}\label{eq:barrier-error-direct}
 |\mathcal E_s|
 \leq C_Mq^{1/4}\bigl(|s|S+s^2X^2\bigr).
\end{equation}

\medskip
\noindent\emph{Step 3: conclusion.}
The functions \(s=-a_q\) and \(s=s_q=a_q+e_q\) satisfy
\eqref{eq:s-assumptions-direct} for a fixed
\(M=M(K,\varepsilon_0,C_E)\).  Thus Steps 1 and 2 justify all
estimates used to obtain \eqref{eq:lower-F-positive} and
\eqref{eq:upper-F-negative}.  Finally, \eqref{eq:FQ} shows that
\(\Fop[u]\) and \(\Qop[u]-1\) have the same sign.  The two inequalities
in \eqref{eq:direct-barriers} follow.
\end{proof}

\subsection{Boundary comparison and the sandwich estimate}

\begin{theorem}[Quantitative comparison with \(V_q\)]
\label{thm:quantitative-comparison}
Choose the constants in the order
\[
 K,\qquad \varepsilon_0,\qquad C_E,
\]
where \(K\) and \(\varepsilon_0\) are chosen as in
Theorem~\ref{thm:direct-barriers}, and \(C_E\) is then chosen
sufficiently large.  For all sufficiently small \(q\), the solution
\(u_q\) of \eqref{eq:uq-problem} satisfies
\begin{equation}\label{eq:sandwich}
 V_q-a_q\Psi_q
 \leq u_q
 \leq V_q+(a_q+e_q)\Psi_q
 \qquad\text{in }\Om_q.
\end{equation}
Moreover, for every fixed \(0<\eta<1\),
\begin{equation}\label{eq:central-comparison}
 |u_q-V_q|
 \leq Cq^2+
 C|\log q|\exp\!\bigl(-c(1-\eta)q^{-1/4}\bigr)
\end{equation}
on \(\Om_q\cap\{|x|\leq\eta L_q\}\), where \(c,C>0\) are
independent of \(q\).
\end{theorem}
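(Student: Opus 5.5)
The plan is to apply the comparison principle, Lemma~\ref{lem:comparison}, on \(D=\Om_q\), using the two functions from Theorem~\ref{thm:direct-barriers} as sub- and supersolutions. By \eqref{eq:domain-inclusion}, \(\overline{\Om_q}\subset\mathcal C_q\). Both \(V_q-a_q\Psi_q\) and \(V_q+(a_q+e_q)\Psi_q\) are smooth on \(\mathcal C_q\), since there \(|y|\le\gamma_q<\pi/2\). By \eqref{eq:direct-barriers} they satisfy \(\Qop[V_q-a_q\Psi_q]\ge 1=\Qop[u_q]\) and \(\Qop[V_q+(a_q+e_q)\Psi_q]\le 1=\Qop[u_q]\) in \(\Om_q\). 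So \eqref{eq:sandwich} reduces to two boundary inequalities on \(\partial\Om_q\), where \(u_q=0\).

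\emph{Lower barrier on the boundary.} On the unchanged channel arcs \(y=\pm\gamma_q(x)\), \(|x|\le L_q\), we have \(V_q=\Psi_q=0\) by \eqref{eq:Vboundary} and \eqref{eq:Psi-sign}. On the cap and interpolation arcs, \(f_q\le\gamma_q\) (Lemma~\ref{lem:smoothing} and \eqref{eq:cap-below-channel}), so these arcs lie in \(\{|y|\le\gamma_q(x)\}\). There \(V_q\le 0\) and \(\Psi_q\ge 0\), hence \(V_q-a_q\Psi_q\le 0=u_q\). Lemma~\ref{lem:comparison} then gives the left inequality in \eqref{eq:sandwich}.

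\emph{Upper barrier on the boundary.} This is the main obstacle: on the caps, \(V_q\) is genuinely negative, possibly of size \(|\log q|\), and it must be dominated by \(e_q\Psi_q\). I would first prove the pointwise bound
\[
 -V_q\le C_1q|\log q|\,\Psi_q\qquad\text{on }\mathcal C_q .
\]
For the leading part, write \(-(g(y)-g(\gamma))/\Psi_q=(g(\gamma)-g(|y|))/(\Acal(\gamma)-\Acal(|y|))\). This ratio is of order \(q\) near \(|y|=\gamma\), where \(\Acal'/g'=2y/\sin 2y\sim q^{-1}\). It is of order \(q|\log q|\) near \(y=0\), where \(g(\gamma)\approx|\log q|\) and \(\Acal(\gamma)\approx\gamma\tan\gamma\sim q^{-1}\). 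I would control intermediate \(y\) by comparing the two integrals \(\int_{|y|}^\gamma\tan r\,\dd r\) and \(\int_{|y|}^\gamma r\sec^2r\,\dd r\) directly, splitting at \(|y|=\gamma/2\). The corrector term is handled using \(|w|\le\Acal(\gamma)-\Acal(|y|)+\tfrac{T^2}{2}(\gamma T-|y|\tan|y|)\). Since \(s\tan s\) is increasing, this gives \(|\tau w|\le C\tau q^{-2}\Psi_q\cdot O(1)\), which is negligible.

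Next, by \eqref{eq:cap-localization}, every cap point has \(|x|\ge\Lambda_q-C_0\). Hence \(\cosh(\varepsilon_0qx)/\cosh(\varepsilon_0q\Lambda_q)\ge e^{-\varepsilon_0qC_0}\ge\tfrac12\), and so \(e_q\ge\tfrac12C_Eq|\log q|\) there. Choosing \(C_E\ge 2C_1\), which is allowed since \(C_E\) is fixed after \(K\) and \(\varepsilon_0\), gives \(V_q+(a_q+e_q)\Psi_q\ge 0=u_q\) on the caps. On the central arcs both terms vanish. Lemma~\ref{lem:comparison} yields the right inequality in \eqref{eq:sandwich}.

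\emph{Central estimate.} From \eqref{eq:sandwich}, \(|u_q-V_q|\le(a_q+e_q)\Psi_q\le Cq^{-1}(Kq^3+e_q)\), using \(\Psi_q\le Cq^{-1}\) from \eqref{eq:Psi-bounds-direct}. The first term gives \(Cq^2\). For \(|x|\le\eta L_q\), we have \(\cosh(\varepsilon_0qx)/\cosh(\varepsilon_0q\Lambda_q)\le 2\exp(-\varepsilon_0q(\Lambda_q-\eta L_q))\) and \(q(\Lambda_q-\eta L_q)\ge(1-\eta)\Theta q^{-1/4}\). Hence \(q^{-1}e_q\le C|\log q|\exp(-c(1-\eta)q^{-1/4})\) with \(c=\varepsilon_0\Theta\), which is \eqref{eq:central-comparison}.
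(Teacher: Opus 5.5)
Your proposal is correct and follows the paper's proof essentially step for step. It uses the same two barriers from Theorem~\ref{thm:direct-barriers} with Lemma~\ref{lem:comparison}, and the same key ratio bound $-V_q\le Cq|\log q|\,\Psi_q$ on $\mathcal C_q$, obtained by comparing $\int\tan s$ with $\int s\sec^2 s$ and splitting at $|y|=\gamma/2$. It also uses \eqref{eq:cap-localization} in the same way to get $e_q\gtrsim C_Eq|\log q|$ on the caps, and the same $\cosh$ estimate in the central region.

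One small point: monotonicity of $s\tan s$ only gives the sign of $\gamma T-|y|\tan|y|$. The $O(1)$ comparison with $\Psi_q$ comes from $(s\tan s)'=\tan s+s\sec^2 s\le 2\Acal'(s)$, as in the paper, or equivalently from $s\tan s=\Acal(s)+g(s)$ combined with your leading-order bound.
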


\begin{proof}
\medskip
\noindent\emph{Step 1: a uniform estimate for \((-V_q)/\Psi_q\).}
We first claim that there is a constant \(C_*>0\), independent of
\(q\), such that
\begin{equation}\label{eq:ratio-direct}
 0\leq\frac{-V_q}{\Psi_q}
 \leq C_*q|\log q|
\end{equation}
throughout the interior of \(\mathcal C_q\).

Since \(g\), \(\Acal\), and \(w(\gamma,\cdot)\) are even, we may put
\[
 \gamma=\gamma_q(x),\qquad
 r=|y|,\qquad
 d=\gamma-r,\qquad
 T=\tan\gamma.
\]
By \eqref{eq:channel-scales-direct},
\[
 \frac{\pi}{2}-\gamma\asymp q,
 \qquad
 \frac{\pi}{2}-r
 =\frac{\pi}{2}-\gamma+d\asymp q+d.
\]

We first estimate the grim-reaper contribution
\[
 \frac{g(\gamma)-g(r)}
 {\Acal(\gamma)-\Acal(r)}.
\]
Suppose first that \(d\leq q\).  Every \(s\in[r,\gamma]\) then
satisfies
\[
 \frac{\pi}{2}-s\asymp q.
\]
Consequently,
\[
 \tan s\leq \frac{C}{q},
 \qquad
 s\sec^2s\geq\frac{c}{q^2}
 \qquad (r\leq s\leq\gamma),
\]
and hence
\[
 g(\gamma)-g(r)
 =\int_r^\gamma\tan s\,\dd s
 \leq\frac{Cd}{q},
\]
whereas
\[
 \Acal(\gamma)-\Acal(r)
 =\int_r^\gamma s\sec^2s\,\dd s
 \geq\frac{cd}{q^2}.
\]
Thus
\[
 \frac{g(\gamma)-g(r)}
 {\Acal(\gamma)-\Acal(r)}
 \leq Cq.
\]

Suppose next that \(d\geq q\) and \(r\geq\gamma/2\).  In this range,
\[
 \cos\gamma\asymp q,
 \qquad
 \cos r\asymp q+d,
\]
so that
\[
 \begin{aligned}
 g(\gamma)-g(r)
 &=\log\frac{\cos r}{\cos\gamma}\\
 &\leq C\log\left(1+\frac{d}{q}\right).
 \end{aligned}
\]
Moreover, \(s\geq\gamma/2\geq c>0\) on \([r,\gamma]\), and therefore
\[
 \begin{aligned}
 \Acal(\gamma)-\Acal(r)
 &=\int_r^\gamma s\sec^2s\,\dd s\\
 &\geq c\int_r^\gamma\sec^2s\,\dd s\\
 &=c(\tan\gamma-\tan r)\\
 &=c\frac{\sin d}{\cos\gamma\cos r}
 \geq\frac{cd}{q(q+d)}.
 \end{aligned}
\]
It follows that
\[
 \begin{aligned}
 \frac{g(\gamma)-g(r)}
 {\Acal(\gamma)-\Acal(r)}
 &\leq
 Cq\left(1+\frac{q}{d}\right)
 \log\left(1+\frac{d}{q}\right)\\
 &\leq Cq|\log q|,
 \end{aligned}
\]
where we used \(d\geq q\) and \(d\leq\gamma<\pi/2\).

Finally, suppose that \(r\leq\gamma/2\).  Since \(\Acal\) is
increasing on \([0,\pi/2)\),
\[
 \begin{aligned}
 \Acal(\gamma)-\Acal(r)
 &\geq\Acal(\gamma)-\Acal(\gamma/2)\\
 &=\int_{\gamma/2}^{\gamma}s\sec^2s\,\dd s\\
 &\geq c\bigl(\tan\gamma-\tan(\gamma/2)\bigr)
 \geq\frac{c}{q}.
 \end{aligned}
\]
On the other hand,
\[
 g(\gamma)-g(r)
 \leq g(\gamma)-g(0)
 =-\log\cos\gamma
 \leq C|\log q|.
\]
Thus the grim-reaper contribution is again bounded by
\(Cq|\log q|\).

We now estimate the corrector.  Since
\[
 \frac{\dfrac{\dd}{\dd s}(s\tan s)}
 {\Acal'(s)}
 =
 1+\frac{\tan s}{s\sec^2s}
 \leq2,
\]
integration from \(r\) to \(\gamma\) gives
\[
 0\leq
 \frac{\gamma\tan\gamma-r\tan r}
 {\Acal(\gamma)-\Acal(r)}
 \leq2.
\]
The explicit formula \eqref{eq:wdef} therefore yields
\[
 \begin{aligned}
 \frac{-w(\gamma,r)}
 {\Acal(\gamma)-\Acal(r)}
 &=
 1+\frac{T^2}{2}
 \frac{\gamma T-r\tan r}
 {\Acal(\gamma)-\Acal(r)}\\
 &\leq1+T^2.
 \end{aligned}
\]
Since \(\tau=q^{9/2}\) and \(T^2\leq Cq^{-2}\),
\[
 \tau\frac{-w(\gamma,r)}
 {\Acal(\gamma)-\Acal(r)}
 \leq Cq^{9/2}(1+q^{-2})
 \leq Cq^{5/2}.
\]
Combining this with the preceding grim-reaper estimates and the
identity
\[
 \frac{-V_q}{\Psi_q}
 =
 \frac{g(\gamma)-g(r)}
 {\Acal(\gamma)-\Acal(r)}
 +
 \tau\frac{-w(\gamma,r)}
 {\Acal(\gamma)-\Acal(r)},
\]
we obtain \eqref{eq:ratio-direct}.  Equivalently,
\[
 0\leq -V_q
 \leq C_*q|\log q|\,\Psi_q
\]
in the interior of \(\mathcal C_q\), and this last inequality extends
to its closure by continuity.

\medskip
\noindent\emph{Step 2: boundary ordering and comparison.}
Consider first a point on a cap or interpolation part of
\(\partial\Om_q\).  By \eqref{eq:cap-localization},
\[
 0\leq\Lambda_q-|x|\leq C_0.
\]
Using the evenness of \(\cosh\), we obtain
\[
 \begin{aligned}
 \frac{\cosh(\varepsilon_0qx)}
 {\cosh(\varepsilon_0q\Lambda_q)}
 &=
 \frac{\cosh(\varepsilon_0q|x|)}
 {\cosh(\varepsilon_0q\Lambda_q)}\\
 &\geq
 \exp\!\bigl(-\varepsilon_0q(\Lambda_q-|x|)\bigr)\\
 &\geq e^{-\varepsilon_0C_0}.
 \end{aligned}
\]
Consequently,
\[
 e_q(x)
 \geq C_Ee^{-\varepsilon_0C_0}q|\log q|
\]
on every cap and interpolation arc.  We now choose
\[
 C_E\geq2e^{\varepsilon_0C_0}C_*.
\]
At every such boundary point for which \(\Psi_q>0\),
\[
 \begin{aligned}
 V_q+(a_q+e_q)\Psi_q
 &=
 \Psi_q\left(
 a_q+e_q-\frac{-V_q}{\Psi_q}
 \right)\\
 &\geq
 \Psi_q\bigl(a_q+C_*q|\log q|\bigr)
 \geq0.
 \end{aligned}
\]
On each unchanged lateral arc,
\[
 y=\pm\gamma_q(x),
\]
we have \(V_q=\Psi_q=0\), so equality holds.  The same conclusion
holds at the junction points by continuity.  We have therefore proved
\begin{equation}\label{eq:upper-boundary-direct}
 V_q+(a_q+e_q)\Psi_q\geq0
 \qquad\text{on }\partial\Om_q.
\end{equation}

For the lower barrier, no localization is needed.  Since
\(\Om_q\subset\mathcal C_q\), \(V_q\leq0\), and \(\Psi_q\geq0\),
\begin{equation}\label{eq:lower-boundary-direct}
 V_q-a_q\Psi_q\leq0
 \qquad\text{on }\partial\Om_q.
\end{equation}

We now apply the comparison principle twice.  By
Theorem~\ref{thm:direct-barriers},
\[
 \Qop[V_q-a_q\Psi_q]\geq1=\Qop[u_q]
 \qquad\text{in }\Om_q,
\]
while \eqref{eq:lower-boundary-direct} and \(u_q=0\) give
\[
 V_q-a_q\Psi_q\leq u_q
 \qquad\text{on }\partial\Om_q.
\]
Lemma~\ref{lem:comparison} therefore gives
\[
 V_q-a_q\Psi_q\leq u_q
 \qquad\text{in }\Om_q.
\]

Similarly,
\[
 \Qop[u_q]=1
 \geq\Qop[V_q+(a_q+e_q)\Psi_q]
 \qquad\text{in }\Om_q,
\]
and \eqref{eq:upper-boundary-direct} gives
\[
 u_q\leq V_q+(a_q+e_q)\Psi_q
 \qquad\text{on }\partial\Om_q.
\]
A second application of Lemma~\ref{lem:comparison} yields
\[
 u_q\leq V_q+(a_q+e_q)\Psi_q
 \qquad\text{in }\Om_q.
\]
This proves \eqref{eq:sandwich}.

\medskip
\noindent\emph{Step 3: the estimate in the central channel.}
Suppose that \(|x|\leq\eta L_q\).  Since
\[
 \frac{\cosh(\varepsilon_0qx)}
 {\cosh(\varepsilon_0q\Lambda_q)}
 \leq
 2\exp\!\bigl(-\varepsilon_0q(\Lambda_q-|x|)\bigr),
\]
and
\[
 \Lambda_q=L_q+6,
 \qquad
 qL_q=\Theta q^{-1/4},
\]
we have
\[
 \begin{aligned}
 q(\Lambda_q-|x|)
 &\geq q(\Lambda_q-\eta L_q)\\
 &=\Theta(1-\eta)q^{-1/4}+6q.
 \end{aligned}
\]
It follows that
\[
 e_q(x)
 \leq Cq|\log q|
 \exp\!\bigl(-c(1-\eta)q^{-1/4}\bigr).
\]

Moreover, the monotonicity of \(\Acal\) and
\eqref{eq:channel-scales-direct} imply
\[
 0\leq\Psi_q
 \leq\Acal(\gamma_q(x))
 \leq\gamma_q(x)\tan\gamma_q(x)
 \leq Cq^{-1}.
\]
Since \(a_q=Kq^3\), we obtain
\[
 a_q\Psi_q\leq Cq^2
\]
and
\[
 e_q\Psi_q
 \leq
 C|\log q|
 \exp\!\bigl(-c(1-\eta)q^{-1/4}\bigr).
\]
Finally, \eqref{eq:sandwich} gives
\[
 -a_q\Psi_q
 \leq u_q-V_q
 \leq(a_q+e_q)\Psi_q.
\]
Therefore
\[
 |u_q-V_q|
 \leq Cq^2+
 C|\log q|
 \exp\!\bigl(-c(1-\eta)q^{-1/4}\bigr)
\]
on \(\Om_q\cap\{|x|\leq\eta L_q\}\), which is
\eqref{eq:central-comparison}.
\end{proof}

\begin{corollary}[Stable transfer of a midpoint defect]
\label{cor:gap-transfer}
Fix \(0<\eta<1\), and suppose that
\[
 P_+,P_-,M\in\Om_q\cap\{|x|\leq\eta L_q\},
 \qquad M=\frac{P_++P_-}{2}.
\]
If, for some \(c_*\in\R\) and \(\delta>0\),
\[
 V_q(P_\pm)\leq c_*-\delta,\qquad
 V_q(M)\geq c_*+\delta,
\]
and if the right-hand side of \eqref{eq:central-comparison} is at most
\(\delta/2\), then
\[
 u_q(P_\pm)\leq c_*-\frac{\delta}{2},
 \qquad
 u_q(M)\geq c_*+\frac{\delta}{2}.
\]
In particular, the sublevel set \(\{u_q<c_*\}\) is not convex.
\end{corollary}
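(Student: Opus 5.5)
The plan is to apply the central comparison estimate \eqref{eq:central-comparison} from Theorem~\ref{thm:quantitative-comparison} pointwise at the three points \(P_+\), \(P_-\), \(M\), and then use the definition of convexity.

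By hypothesis, all three points lie in \(\Om_q\cap\{|x|\leq\eta L_q\}\). So \eqref{eq:central-comparison} holds at each of them, and together with the smallness assumption it gives
\[
 |u_q(Z)-V_q(Z)|\leq\frac{\delta}{2},\qquad Z\in\{P_+,P_-,M\}.
\]
At \(P_\pm\) this yields
\[
 u_q(P_\pm)\leq V_q(P_\pm)+\frac{\delta}{2}\leq c_*-\frac{\delta}{2}.
\]
At \(M\) it yields
\[
 u_q(M)\geq V_q(M)-\frac{\delta}{2}\geq c_*+\frac{\delta}{2}.
\]
These are the two displayed inequalities.

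For the nonconvexity statement, note that \(u_q(P_\pm)<c_*\), so \(P_+\) and \(P_-\) both lie in \(\{u_q<c_*\}\). On the other hand \(u_q(M)>c_*\), so the midpoint \(M=\tfrac12(P_++P_-)\) is not in this set. Since \(M\in\Om_q\) by assumption, the segment joining \(P_+\) and \(P_-\) leaves the sublevel set at a point of \(\Om_q\). Hence \(\{u_q<c_*\}\) is not convex.

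There is no real obstacle here. The only point to check is that \(M\) is required to lie in \(\Om_q\cap\{|x|\leq\eta L_q\}\) explicitly, not merely inferred, so that \eqref{eq:central-comparison} applies there too. This is automatic in any case, because \(\Om_q\) is convex and the strip \(\{|x|\leq\eta L_q\}\) is convex.
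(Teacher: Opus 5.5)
Your proposal is correct and follows the paper's proof exactly: apply \eqref{eq:central-comparison} at \(P_+\), \(P_-\), and \(M\) to obtain the two inequalities, then observe that both endpoints lie in \(\{u_q<c_*\}\) while their midpoint does not. Your side remark that \(M\in\Om_q\cap\{|x|\leq\eta L_q\}\) follows automatically from the convexity of \(\Om_q\) and of the strip is also correct.
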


\begin{proof}
Apply \eqref{eq:central-comparison} at the three points.  The two
endpoints belong to the sublevel set, while their midpoint does not.
\end{proof}

\section{A nonconvex interior sublevel}
\label{sec:nonconvexity}
In this section, we complete the proof of the Theorem~\ref{thm:main}.

\begin{theorem}[A stable midpoint defect]\label{thm:stable-gap}
Let \(\Om_q\) and \(u_q\) be as in
Section~\ref{sec:domain}.  There are constants \(c_0,q_0>0\) such that,
for every \(0<q<q_0\), one can find
\[
 P_+,P_-,M_q\in\Om_q,
 \qquad
 M_q=\frac{P_++P_-}{2},
\]
for which
\begin{equation}\label{eq:uq-gap}
 u_q(P_\pm)\leq-\log 2-c_0q^{3/2},
 \qquad
 u_q(M_q)\geq-\log 2+c_0q^{3/2}.
\end{equation}
Consequently, \(\{u_q<-\log 2\}\) is not convex.  In fact,
\(\{u_q<c\}\) is not convex whenever
\begin{equation}\label{eq:nearby-levels}
 |c+\log 2|<\frac{c_0}{2}q^{3/2}.
\end{equation}
\end{theorem}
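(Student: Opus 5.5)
The plan is to exhibit the three points by an explicit asymptotic computation on the corrected model \(V_q\), and then transfer the defect to \(u_q\) using Corollary~\ref{cor:gap-transfer}. By \eqref{eq:central-comparison}, on \(\{|x|\le\eta L_q\}\) the error \(|u_q-V_q|\) is at most \(Cq^2+C|\log q|e^{-c(1-\eta)q^{-1/4}}=o(q^{3/2})\). It therefore suffices to find \(P_\pm,M_q\) in the central channel with
\[
V_q(P_\pm)\le-\log2-2c_0q^{3/2},\qquad V_q(M_q)\ge-\log2+2c_0q^{3/2}.
\]
The corollary with \(\delta=2c_0q^{3/2}\) then gives \eqref{eq:uq-gap}. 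For a level \(c\) as in \eqref{eq:nearby-levels}, the same three points still satisfy \(u_q(P_\pm)<c<u_q(M_q)\), which gives the final claim.

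To locate the points, I would pass to rescaled variables adapted to the near-critical scale. Set \(X=\lambda x/q\in[-\eta\Theta,\eta\Theta]\), write \(\cos\gamma_q(x)=q\,\phi(X)\) with \(\phi(X)=1-X+O(X^2)+O(q)\), and write \(y=\pi/2-q\psi\) with \(\psi\) of order one. The level \(-\log2\) then sits near \(\psi\approx2\phi\). In these variables I expect
\[
g(y)-g(\gamma_q)=\log\frac{\phi}{\sin(q\psi)/q}+O(q^2)=\log\frac{\phi}{\psi}+O(q^2),
\]
and a corrector term
\[
\tau w(\gamma_q,y)=q^{3/2}\,W(\phi,\psi)+O(q^{5/2}).
\]
Here \(W\) is an explicit function obtained from \eqref{eq:wdef} using \(T\sim(q\phi)^{-1}\) and \(\tan y\sim(q\psi)^{-1}\). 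Its dominant part comes from \(\tfrac{T^2}{2}(\gamma T-y\tan y)\) together with \(\Acal(y)-\Acal(\gamma)\), giving \(\tau\cdot q^{-3}=q^{3/2}\).

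The zeroth-order part \(\log(\phi/\psi)\) is, to leading order, an affine-type object: its level curves \(\psi=2\phi(X)\) are straight up to \(O(X^2)\) corrections and \(O(q)\) errors. By the mechanism of Proposition~\ref{prop:no-go}, it produces no convexity defect of order \(q^{3/2}\).

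Concretely, I would take \(P_\pm=(x_\pm,y_\pm)\) with \(X_\pm=\pm X_0\) for a fixed \(X_0\le\eta\Theta\), placed on the zeroth-order level curve \(\psi=2\phi(X)\), and \(M_q\) their midpoint. At \(M_q\), the zeroth-order term differs from \(-\log2\) by at most \(O(X_0^2\cdot\text{small})+O(q^2)\). I need to check that the quadratic part of \(\phi\) enters only through \(q^7x^2\) and through the expansion of \(\cos\) in \(\lambda x\), which is \(O(q\Theta^2)\) in \(\phi\). If that term is not negligible against \(q^{3/2}\), I would shift \(y_\pm\) along the exact uncorrected level curve \(y=c(x)+R_h(A(x))\) instead. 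Then the uncorrected midpoint value can only lie on the sublevel side, and its defect has a definite sign to be tracked. In either version, the decisive quantity is the second difference
\[
W(\phi_+,\psi_+)+W(\phi_-,\psi_-)-2W(\phi_M,\psi_M)
\]
along the line \(\psi=2\phi\), which is nonzero and of order one for fixed \(X_0\). Multiplied by \(q^{3/2}\), this gives \(V_q(P_\pm)\) below the level and \(V_q(M_q)\) above it by \(\asymp q^{3/2}\). Adjusting \(y_\pm\) by \(O(q^{5/2})\) (that is, \(\psi\) by \(O(q^{3/2})\)) then centres the gap symmetrically about \(-\log2\).

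The main obstacle is the sign and nondegeneracy of this second difference of \(W\) along the level line. First I would compute \(W\) explicitly, which should be a rational-plus-logarithmic function such as \(\tfrac{1}{2\phi^2}\bigl(\tfrac1\phi-\tfrac1\psi\bigr)\) plus lower terms. I would then verify convexity in \(X\) along \(\psi=2\phi\), hoping the dominant piece reduces to something like \(\tfrac{1}{4\phi^3}\) restricted to an affine \(\phi\), which is strictly convex. If the sign comes out opposite, I would place \(P_\pm\) on the lower arc or reflect in \(x\). A secondary concern is keeping all \(O(q^2)\) remainders uniform for \(|X|\le\eta\Theta\).
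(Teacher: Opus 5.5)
Your architecture is the paper's. In the scaled variables $\gamma=\pi/2-q\xi$, $y=\pi/2-q\psi$, the corrector enters with $\delta=\tau q^{-3}=q^{3/2}$. One builds a defect of size $\asymp q^{3/2}$ for $V_q$, and Corollary~\ref{cor:gap-transfer} with the $o(q^{3/2})$ bound \eqref{eq:central-comparison} transfers it to $u_q$. That part, including the nearby levels, is fine.

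\textbf{Genuine gap: the decisive quantity is wrong.} The gap is the step you defer as the main obstacle, which is the actual content of the theorem, and the test you propose for it is wrong. To leading order $W(\phi,\psi)=\frac{\pi}{4\phi^2}\bigl(\frac1\psi-\frac1\phi\bigr)$. The factor $\pi/2$ comes from $\gamma\tan\gamma\approx\frac\pi2\cot(q\xi)$, and $W<0$ in the channel. So on the line $\psi=2\phi$, with $c_*=-\log 2$, one has $V_q-c_*\approx-\pi\delta/(8\phi^3)$. Hence all three of your points lie in $\{V_q<c_*\}$. Since this expression is monotone in $X$ with first-order variation $\asymp\delta X_0$, even $V_q(P_-)>V_q(M_q)$. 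Removing this requires moving $\psi_\pm$ by $\delta\epsilon_\pm$. That changes $\log(\phi/\psi)$ by $-\delta\epsilon_\pm/(2\phi_\pm)$, an $X$-dependent weight, while $\psi_M$ moves by the average. So the re-centring enters the second difference at the same order $\delta$; it does not merely centre the gap. Solving the resulting linear inequalities, admissible $\epsilon_\pm$ exist if and only if $\phi_+W_++\phi_-W_-<2\phi_MW_M$, with $W_\bullet=W(\phi_\bullet,2\phi_\bullet)$. That is, $\phi\mapsto\phi\,W(\phi,2\phi)$ must be strictly concave. This is not the same as concavity of $W$ along the line; for example, $W=-1/\phi$ there would give no defect. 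Equivalently, the first-order displacement $\rho_1=2\xi\,W(\xi,2\xi)=-\pi/(4\xi^2)$ in $\rho=2\xi+\delta\rho_1+O(\delta^2)$ must be strictly concave. This is exactly the paper's $(\rho_1)_{\xi\xi}=-3\pi/(2\xi^4)$. It yields $r_q''\asymp q^5$, then $\Delta_q\asymp q^{5/2}$ on $|x|\le\theta q^{-5/4}$, and a value gap $\asymp q^{3/2}$ via $(V_q)_y\asymp q^{-1}$.

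\textbf{The sign is backwards, and the fallbacks cannot fix it.} Having $V_q(P_\pm)$ below the level and $V_q(M_q)$ above it needs concavity along the segment. You instead hope for convexity, and your guessed $W$ also has the wrong sign. Your fallbacks cannot repair an unfavourable sign. $V_q$ is even in $y$, so the lower arc reproduces the mirrored triple with identical values. The reflection $x\mapsto-x$ leaves second differences unchanged. The sign is precisely what Proposition~\ref{prop:no-go} shows fails without the corrector.

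\textbf{The zeroth-order curvature needs a sharp bound.} The margin is only $q^{1/2}$. We have $\rho_0=\frac1q\arcsin(2\sin q\xi)=2\xi+q^2\xi^3+O(q^4)$, so the unfavourable curvature is $O(q^2)$ against $\delta=q^{3/2}$. The quadratic part of $\phi$ is likewise $O(q^2)$; a bound of order $q\Theta^2$ would swamp the effect.

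Once these points are fixed, your three-point route is viable and slightly lighter than the paper's. With $X_0$ fixed it needs only $C^0$ asymptotics of $\Wcal$ at explicit points. The paper instead proves the $C^2$-uniform expansion \eqref{eq:rho-remainder} to control $\mathscr R_{\gamma\gamma}$, and then integrates $r_q''$.
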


\begin{proof}
Set
\[
 c_*=-\log 2.
\]
The proof has two parts.  We first construct a quantitative midpoint
defect for the corrected profile \(V_q\).  We then use
Theorem~\ref{thm:quantitative-comparison} to transfer this defect to
the exact Dirichlet solution \(u_q\).

\medskip
\noindent\emph{Step 1: the upper root of the model level.}
For \(\gamma\) sufficiently close to \(\pi/2\), let
\(\mathscr R(\gamma,\tau)\in(0,\gamma)\) be the positive solution of
\begin{equation}\label{eq:root}
 g(\mathscr R)-g(\gamma)
 +\tau w(\gamma,\mathscr R)=c_*.
\end{equation}
We first justify this definition.  For fixed \(\gamma\), put
\[
 F_\gamma(R)=g(R)-g(\gamma)+\tau w(\gamma,R).
\]
Since \(w(\gamma,\gamma)=0\),
\[
 F_\gamma(\gamma)=0>c_*.
\]
On the other hand, \(g(0)=0\), \(g(\gamma)=-\log\cos\gamma\), and
the explicit formula \eqref{eq:wdef} gives \(w(\gamma,0)<0\).
Consequently,
\[
 F_\gamma(0)
 =\log\cos\gamma+\tau w(\gamma,0)
 <\log\cos\gamma<c_*
\]
when \(\gamma\) is sufficiently close to \(\pi/2\).  The intermediate
value theorem therefore gives a root in \((0,\gamma)\).  Moreover,
for \(0<R<\gamma\),
\[
 \partial_R F_\gamma(R)
 =\tan R+\tau w_y(\gamma,R)>0;
\]
indeed, all terms in the formula
\[
 w_y(\gamma,R)
 =R\sec^2R+\frac{\tan^2\gamma}{2}
 \bigl(\tan R+R\sec^2R\bigr)
\]
are positive.  Thus the root is unique.  The same strict inequality
and the implicit-function theorem show that
\(\mathscr R(\gamma,\tau)\) is smooth in the range considered below.

Fix \(0<a_0<1/8\), choose
\[
 0<\theta<\min\{\Theta/4,a_0/4\},
\]
and put
\begin{equation}\label{eq:Hq}
 H_q=\frac{\theta q}{\lambda}
 =\theta q^{-5/4},
 \qquad
 r_q(x)=\mathscr R(\gamma_q(x),\tau).
\end{equation}
It follows from \eqref{eq:Vdef} and \eqref{eq:root} that
\[
 V_q(x,r_q(x))=c_*
 \qquad (|x|\leq H_q).
\]

\medskip
\noindent\emph{Step 2: the curvature of the scaled root.}
We shall prove that
\begin{equation}\label{eq:Rgg-target}
 \mathscr R_{\gamma\gamma}
 \left(\frac{\pi}{2}-q\xi,q^{9/2}\right)
 =q^{1/2}\left(\frac{3\pi}{2\xi^4}+o(1)\right)
\end{equation}
uniformly for
\[
 \xi\in I:=[1-2a_0,1+2a_0].
\]
Introduce the scaled variables
\[
 \gamma=\frac{\pi}{2}-q\xi,
 \qquad
 \mathscr R=\frac{\pi}{2}-q\rho,
\]
and define
\[
 \Wcal(q,\xi,\rho)
 =q^3w\!\left(
 \frac{\pi}{2}-q\xi,
 \frac{\pi}{2}-q\rho
 \right).
\]
To identify the limit of \(\Wcal\), set
\[
 B_q(z)=\left(\frac{\pi}{2}-qz\right)\cot(qz).
\]
Since
\[
 \Acal\!\left(\frac{\pi}{2}-qz\right)
 =B_q(z)+\log\sin(qz),
\]
we have
\[
 \Acal(\mathscr R)-\Acal(\gamma)
 =B_q(\rho)-B_q(\xi)
 +\log\frac{\sin(q\rho)}{\sin(q\xi)}.
\]
Also,
\[
 \tan\gamma=\cot(q\xi),
 \qquad
 \mathscr R\tan\mathscr R-\gamma\tan\gamma
 =B_q(\rho)-B_q(\xi).
\]
Substitution in \eqref{eq:wdef}, followed by multiplication by
\(q^3\), gives the exact identity
\begin{align}
 \Wcal(q,\xi,\rho)
 ={}&q^3\left[
 B_q(\rho)-B_q(\xi)
 +\log\frac{\sin(q\rho)}{\sin(q\xi)}
 \right]\notag\\
 &+\frac12\bigl(q\cot(q\xi)\bigr)^2
 q\bigl[B_q(\rho)-B_q(\xi)\bigr].
 \label{eq:W-smooth-decomposition}
\end{align}

When \(z\) ranges in a fixed compact subset of \((0,\infty)\), the
functions
\[
 q\cot(qz),
 \qquad
 qB_q(z)=\left(\frac{\pi}{2}-qz\right)q\cot(qz),
 \qquad
 \frac{\sin(q\rho)}{\sin(q\xi)}
\]
extend smoothly across \(q=0\).  Their values at \(q=0\) are
\[
 \frac1z,
 \qquad
 \frac{\pi}{2z},
 \qquad
 \frac{\rho}{\xi},
\]
respectively.  The first line of
\eqref{eq:W-smooth-decomposition} tends to zero, while the second line
has a finite limit.  Hence \(\Wcal\) extends smoothly to \(q=0\), and
\begin{equation}\label{eq:Wlimit}
 \Wcal(0,\xi,\rho)
 =\frac{\pi}{4\xi^2}
 \left(\frac1\rho-\frac1\xi\right).
\end{equation}

The correct small parameter in the root equation is
\[
 \delta=\tau q^{-3}.
\]
Indeed, since \(g(s)=-\log\cos s\), equation \eqref{eq:root} is
equivalent to
\[
 \log\frac{2\cos\gamma}{\cos\mathscr R}
 +\tau w(\gamma,\mathscr R)=0.
\]
Using \(\cos\gamma=\sin(q\xi)\),
\(\cos\mathscr R=\sin(q\rho)\), and
\(\delta\Wcal=\tau w\), this becomes
\begin{equation}\label{eq:scaled-root}
 H(q,\xi,\rho,\delta)
 :=\log\frac{2\sin(q\xi)}{\sin(q\rho)}
 +\delta\Wcal(q,\xi,\rho)=0.
\end{equation}
The sine quotient in this formula is understood by its smooth
extension at \(q=0\).  Thus
\[
 H(0,\xi,\rho,0)=\log\frac{2\xi}{\rho}.
\]
For every \(\xi\in I\), the limiting root is \(\rho=2\xi\), and
\[
 H_\rho(0,\xi,2\xi,0)=-\frac1{2\xi}.
\]
Because \(I\Subset(0,\infty)\), these derivatives are uniformly
bounded away from zero.  The parameter-dependent implicit-function
theorem, applied along the compact set
\(\{(0,\xi,2\xi,0):\xi\in I\}\), therefore produces constants
\(q_*,\delta_*>0\) and a unique smooth branch
\[
 \rho=\rho(q,\xi,\delta)
\]
for \(|q|\leq q_*\), \(\xi\in I\), and
\(|\delta|\leq\delta_*\).  By decreasing these constants, we may
assume that \(H_\rho\) stays uniformly separated from zero throughout
the corresponding compact parameter box.

The branch just obtained is the scaled version of the root defined in
Step~1.  In fact, \(\rho=2\xi+o(1)\) uniformly on \(I\), so
\(\rho-\xi\) stays positive and bounded away from zero.  Therefore,
for small positive \(q\),
\[
 0<\frac{\pi}{2}-q\rho
 <\frac{\pi}{2}-q\xi<\frac{\pi}{2}.
\]
It follows that the scaled branch corresponds to a root in
\((0,\gamma)\); uniqueness of the root then identifies it with
\(\mathscr R(\gamma,\tau)\).

Define
\[
 \rho_0(q,\xi)=\rho(q,\xi,0),
 \qquad
 \rho_1(q,\xi)=
 \partial_\delta\rho(q,\xi,0).
\]
Taylor's formula in \(\delta\) gives
\begin{equation}\label{eq:rho-expansion}
 \rho(q,\xi,\delta)
 =\rho_0(q,\xi)+\delta\rho_1(q,\xi)
 +O_{C^2_\xi}(\delta^2).
\end{equation}
More precisely,
\begin{equation}\label{eq:rho-remainder}
 \bigl\|\rho(q,\cdot,\delta)
 -\rho_0(q,\cdot)-\delta\rho_1(q,\cdot)
 \bigr\|_{C^2(I)}
 \leq C\delta^2.
\end{equation}
To justify the uniform \(C^2\) estimate, first fix a compact interval
\(I'\Subset(0,\infty)\) with \(I\Subset\operatorname{int}I'\), and
choose a compact interval \(K_\rho\Subset(0,\infty)\) whose interior
contains \(\{2\xi:\xi\in I'\}\).  On
\[
 [-q_*,q_*]\times I'\times K_\rho\times[-\delta_*,\delta_*],
\]
write every sine quotient using
\(\operatorname{sinc}(z)=\sin z/z\).  The decomposition
\eqref{eq:W-smooth-decomposition} then shows that \(H\), including its
extension at \(q=0\), is \(C^\infty\) on this compact set.  In
addition,
\[
 H_\rho(0,\xi,\rho,0)=-\frac1\rho
 \qquad
 (\xi\in I',\ \rho\in K_\rho).
\]
Hence \(|H_\rho|\geq(\max K_\rho)^{-1}\) on the limiting parameter
slice.  By uniform continuity, after decreasing \(q_*,\delta_*\), one
has
\[
 |H_\rho|\geq\frac1{2\max K_\rho}
\]
throughout the displayed parameter box.  The implicit branch remains
in the interior of \(K_\rho\) after the same decrease.  All derivatives
of \(H\) needed below are therefore uniformly bounded there, and the
division by \(H_\rho\) is uniform.

Differentiate the implicit identity
\[
 H(q,\xi,\rho(q,\xi,\delta),\delta)=0.
\]
Solving successively for the derivatives containing the highest
derivative of \(\rho\), first twice in \(\delta\) and then up to twice
in \(\xi\), expresses
\(\partial_\xi^j\partial_\delta^2\rho\), \(0\leq j\leq2\), as finite
sums of products of lower derivatives of \(\rho\) and derivatives of
\(H\), divided by powers of \(H_\rho\).  Induction on the total order
therefore gives
\[
 \sup_{\substack{|q|\leq q_*\\|\delta|\leq\delta_*}}
 \bigl\|
 \partial_\delta^2\rho(q,\cdot,\delta)
 \bigr\|_{C^2(I)}
 \leq C.
\]
The integral form of Taylor's theorem in \(\delta\) now yields
\eqref{eq:rho-remainder}, with \(C\) independent of \(q\) and
\(\delta\).

At \(\delta=0\), equation \eqref{eq:scaled-root} gives
\[
 \sin(q\rho_0)=2\sin(q\xi),
\]
so, on the branch under consideration,
\[
 \rho_0(q,\xi)
 =\frac1q\arcsin\bigl(2\sin(q\xi)\bigr)
 =2\xi+q^2\xi^3+O_{C^2(I)}(q^4).
\]
Differentiating the implicit identity once in \(\delta\) gives
\[
 H_\rho\rho_\delta+H_\delta=0.
\]
At \(\delta=0\),
\[
 H_\rho(q,\xi,\rho_0,0)
 =-q\cot(q\rho_0),
 \qquad
 H_\delta(q,\xi,\rho_0,0)
 =\Wcal(q,\xi,\rho_0),
\]
and hence
\[
 \rho_1(q,\xi)
 =\frac{\Wcal(q,\xi,\rho_0(q,\xi))}
 {q\cot(q\rho_0(q,\xi))}.
\]
In view of \eqref{eq:Wlimit},
\[
 \Wcal(0,\xi,2\xi)=-\frac{\pi}{8\xi^3},
 \qquad
 \rho_1(0,\xi)=-\frac{\pi}{4\xi^2}.
\]
The smooth dependence established above therefore implies
\[
 \rho_1(q,\cdot)
 =-\frac{\pi}{4\xi^2}+o_{C^2(I)}(1),
\]
and in particular
\[
 (\rho_1)_{\xi\xi}(q,\xi)
 =-\frac{3\pi}{2\xi^4}+o(1)
\]
uniformly on \(I\).

For comparison, let
\[
 R_0(\gamma)=\frac{\pi}{2}-q\rho_0(q,\xi),
 \qquad
 \gamma=\frac{\pi}{2}-q\xi.
\]
The uncorrected root satisfies
\[
 \cos R_0(\gamma)=2\cos\gamma.
\]
Differentiating once gives
\[
 (R_0)_\gamma
 =\frac{2\sin\gamma}{\sin R_0}.
\]
Differentiating again and using
\(\cos R_0=2\cos\gamma\) yields
\begin{equation}\label{eq:R0-second}
 (R_0)_{\gamma\gamma}
 =-\frac{6\cos\gamma}
 {(1-4\cos^2\gamma)^{3/2}}
 =-6q\xi+O(q^3),
\end{equation}
uniformly for \(\xi\in I\).

We now set \(\tau=q^{9/2}\), and hence
\(\delta=q^{3/2}\).  Since
\[
 \mathscr R=\frac{\pi}{2}-q\rho,
 \qquad
 \partial_\gamma=-q^{-1}\partial_\xi,
\]
we have the exact identities
\[
 \mathscr R_\gamma=\rho_\xi,
 \qquad
 \mathscr R_{\gamma\gamma}=-q^{-1}\rho_{\xi\xi}.
\]
Using \eqref{eq:rho-expansion},
\eqref{eq:rho-remainder}, and \eqref{eq:R0-second}, we obtain
\begin{align}
 \mathscr R_{\gamma\gamma}
 &=(R_0)_{\gamma\gamma}
 -\frac{\delta}{q}(\rho_1)_{\xi\xi}
 +O\!\left(\frac{\delta^2}{q}\right)\notag\\
 &=-6q\xi+O(q^3)
 +q^{1/2}\left(\frac{3\pi}{2\xi^4}+o(1)\right)
 +O(q^2)\notag\\
 &=q^{1/2}\left(\frac{3\pi}{2\xi^4}+o(1)\right),
 \label{eq:Rgg-computed}
\end{align}
uniformly for \(\xi\in I\).  This proves
\eqref{eq:Rgg-target}.  The same expansion, now used without two
\(\xi\)-derivatives, gives
\begin{equation}\label{eq:Rgamma-computed}
 \mathscr R_\gamma=\rho_\xi=2+O(q^{3/2}),
 \qquad
 \rho=2\xi+O(q^{3/2})
\end{equation}
uniformly on \(I\).

\medskip
\noindent\emph{Step 3: convexity of the composed level graph.}
Define
\[
 \xi_q(x)=\frac{\pi/2-\gamma_q(x)}q.
\]
The definition of \(\gamma_q\) gives the exact formula
\[
 \xi_q(x)
 =\frac{\arcsin q}{q}-\frac{\lambda x}{q}+q^6x^2.
\]
For \(|x|\leq H_q\),
\[
 \left|\frac{\lambda x}{q}\right|\leq\theta,
 \qquad
 q^6x^2\leq Cq^{7/2},
\]
and hence
\[
 \xi_q(x)
 =1-\frac{\lambda x}{q}+O(q^2)+O(q^{7/2}).
\]
Our choice of \(\theta\) therefore ensures, after decreasing \(q_0\),
that \(\xi_q(x)\in I\) for every \(|x|\leq H_q\).

Twice differentiating
\(r_q(x)=\mathscr R(\gamma_q(x),\tau)\) gives
\begin{equation}\label{eq:r-chain-needed}
 r_q''(x)
 =\mathscr R_{\gamma\gamma}(\gamma_q(x),\tau)
 (\gamma_q'(x))^2
 +\mathscr R_\gamma(\gamma_q(x),\tau)\gamma_q''(x).
\end{equation}
On \(|x|\leq H_q\), direct differentiation of \(\gamma_q\) yields
\[
 (\gamma_q')^2=q^{9/2}+O(q^8),
 \qquad
 \gamma_q''=-2q^7.
\]
Combining these estimates with
\eqref{eq:Rgg-computed} and \eqref{eq:Rgamma-computed}, we find
\begin{equation}\label{eq:r-second-proved}
 r_q''(x)
 =q^5\left(
 \frac{3\pi}{2\xi_q(x)^4}+o(1)
 \right)-4q^7+o(q^7),
\end{equation}
where the \(o(1)\) is uniform for \(|x|\leq H_q\).  Since
\(I\Subset(0,\infty)\), the leading coefficient has a positive
uniform lower bound.  It follows that there exist constants
\(c_1,C_1>0\), independent of small \(q\), such that
\begin{equation}\label{eq:r-second-needed}
 c_1q^5\leq r_q''(x)\leq C_1q^5
 \qquad (|x|\leq H_q).
\end{equation}

The localization in \eqref{eq:Rgamma-computed} also gives
\[
 \gamma_q(x)-r_q(x)
 =q\bigl(\rho(q,\xi_q(x),q^{3/2})-\xi_q(x)\bigr)
 \asymp q
\]
uniformly for \(|x|\leq H_q\).  Indeed,
\(\rho-\xi=\xi+O(q^{3/2})\), while \(\xi\in I\).

Since \((V_q)_y>0\) along the upper branch \(y=r_q(x)\), the
sublevel set \(\{V_q<c_*\}\) lies locally below this graph.  Hence
the estimate \(r_q''>0\) already shows that this sublevel set is
locally nonconvex.  Equivalently, with respect to the outward unit
normal
\[
 \nu=\frac{DV_q}{|DV_q|}
 =\frac{(-r_q',1)}{\sqrt{1+(r_q')^2}},
\]
its boundary curvature is
\[
 \kappa_{\mathrm{out}}
 =-\frac{r_q''}{(1+(r_q')^2)^{3/2}}<0.
\]
This differential observation is not stable under the available
\(C^0\)-comparison with \(u_q\).  We therefore extract from it a
finite midpoint defect with a quantitative positive margin.

\medskip
\noindent\emph{Step 4: a finite midpoint defect for \(V_q\).}
Define
\begin{equation}\label{eq:Deltadef}
 \Delta_q
 =\frac{r_q(H_q)+r_q(-H_q)}2-r_q(0).
\end{equation}
Taylor's formula with integral remainder gives
\begin{align*}
 r_q(H_q)-r_q(0)-H_qr_q'(0)
 &=\int_0^{H_q}(H_q-t)r_q''(t)\,\dd t,\\
 r_q(-H_q)-r_q(0)+H_qr_q'(0)
 &=\int_0^{H_q}(H_q-t)r_q''(-t)\,\dd t.
\end{align*}
Adding these identities and dividing by two yields the exact formula
\[
 \Delta_q
 =\frac12\int_0^{H_q}(H_q-t)
 \bigl[r_q''(t)+r_q''(-t)\bigr]\dd t.
\]
Since
\[
 \int_0^{H_q}(H_q-t)\,\dd t=\frac{H_q^2}{2},
\]
the bounds \eqref{eq:r-second-needed} and
\(H_q=\theta q^{-5/4}\) imply
\begin{equation}\label{eq:Deltascale}
 c_3q^{5/2}\leq\Delta_q\leq C_3q^{5/2}
\end{equation}
for constants \(c_3,C_3>0\) independent of \(q\).

Set
\begin{equation}\label{eq:points}
 d_q=\frac{\Delta_q}{4},
 \qquad
 P_\pm=(\pm H_q,r_q(\pm H_q)-d_q),
 \qquad
 M_q=\frac{P_++P_-}{2}.
\end{equation}
By the definition of \(\Delta_q\),
\begin{align*}
 (M_q)_x&=0,\\
 (M_q)_y
 &=\frac{r_q(H_q)+r_q(-H_q)}2-d_q
 =r_q(0)+\Delta_q-d_q
 =r_q(0)+\frac{3\Delta_q}{4}.
\end{align*}

We now check both that these points lie in \(\Om_q\) and that
\((V_q)_y\) has the required size on the intervening vertical
segments.  The preceding localization gives
\[
 \gamma_q(x)-r_q(x)\geq cq
 \qquad (|x|\leq H_q).
\]
Since \(d_q+\Delta_q=O(q^{5/2})=o(q)\), we have, for small \(q\),
\[
 r_q(\pm H_q)-d_q<\gamma_q(\pm H_q)
\]
and
\[
 \gamma_q(0)-(M_q)_y
 =\gamma_q(0)-r_q(0)-\frac{3\Delta_q}{4}
 \geq cq-Cq^{5/2}>0.
\]
Moreover, \(r_q(x)=\pi/2-q\rho\) stays uniformly positive, and the
vertical displacements are \(o(1)\).  Hence none of these points can
meet the lower boundary \(y=-\gamma_q(x)\).
Thus the three points, as well as the short vertical segments joining
them to the level graph, lie in the channel.  Moreover,
\(H_q/L_q=\theta/\Theta<1\), so
\eqref{eq:central-domain} shows that they lie strictly inside
\(\Om_q\).

On all these vertical segments,
\[
  \frac{\pi}{2}-y\asymp q,
 \qquad
 \tan y\asymp q^{-1}.
\]
Using the explicit derivative of the corrector displayed in Step~1,
and also \(\tan\gamma_q\asymp q^{-1}\), we obtain
\[
 |w_y(\gamma_q(x),y)|\leq Cq^{-4}.
\]
Since \(\tau=q^{9/2}\),
\[
 \tau|w_y(\gamma_q(x),y)|\leq Cq^{1/2}.
\]
Therefore
\[
 (V_q)_y
 =\tan y+\tau w_y(\gamma_q(x),y)
\]
satisfies, after decreasing \(q_0\),
\begin{equation}\label{eq:Vy-needed}
 \frac{c_4}{q}\leq(V_q)_y\leq\frac{C_4}{q}
\end{equation}
on every vertical segment used below.

Since \(V_q(\pm H_q,r_q(\pm H_q))=c_*\), the fundamental theorem of
calculus and \eqref{eq:Vy-needed} give
\begin{align}
 V_q(P_\pm)-c_*
 &=-\int_{r_q(\pm H_q)-d_q}^{r_q(\pm H_q)}
 (V_q)_y(\pm H_q,s)\,\dd s
 \leq-\frac{c_4}{q}d_q,
 \notag\\
 V_q(M_q)-c_*
 &=\int_{r_q(0)}^{r_q(0)+3\Delta_q/4}
 (V_q)_y(0,s)\,\dd s
 \geq\frac{3c_4}{4q}\Delta_q.
 \label{eq:model-gap-from-root}
\end{align}
Combining these inequalities with
\(d_q=\Delta_q/4\) and \eqref{eq:Deltascale}, we obtain a constant
\(c_2>0\), independent of \(q\), such that
\begin{equation}\label{eq:model-gap}
 V_q(P_\pm)\leq c_*-c_2q^{3/2},
 \qquad
 V_q(M_q)\geq c_*+c_2q^{3/2}.
\end{equation}

\medskip
\noindent\emph{Step 5: transfer to the exact Dirichlet solution.}
Because
\[
 \frac{H_q}{L_q}=\frac{\theta}{\Theta}<\frac14,
\]
all three points lie in
\(\Om_q\cap\{|x|\leq\eta L_q\}\) with, for instance,
\(\eta=1/2\).  The central comparison estimate
\eqref{eq:central-comparison} gives at these points
\begin{equation}\label{eq:comparison-smaller-gap}
 |u_q-V_q|
 \leq Cq^2+C|\log q|e^{-cq^{-1/4}}
 =o(q^{3/2}).
\end{equation}
After decreasing \(q_0\), the right-hand side is at most
\(\frac12c_2q^{3/2}\).  Applying
Corollary~\ref{cor:gap-transfer} to \eqref{eq:model-gap}, with
\(\delta=c_2q^{3/2}\), yields
\[
 u_q(P_\pm)
 \leq c_*-\frac{c_2}{2}q^{3/2},
 \qquad
 u_q(M_q)
 \geq c_*+\frac{c_2}{2}q^{3/2}.
\]
Thus \eqref{eq:uq-gap} holds with \(c_0=c_2/2\).  In particular,
\(P_+,P_-\in\{u_q<c_*\}\), whereas their midpoint \(M_q\) does not
 belong to this sublevel set.  Hence \(\{u_q< -\log 2\}\) is not convex.

Finally, suppose that \(c\) satisfies \eqref{eq:nearby-levels}.  Then
\[
 u_q(P_\pm)
 \leq c_*-c_0q^{3/2}
 <c_*-\frac{c_0}{2}q^{3/2}<c,
\]
while
\[
 u_q(M_q)
 \geq c_*+c_0q^{3/2}
 >c_*+\frac{c_0}{2}q^{3/2}>c.
\]
The same two endpoints and the same midpoint therefore prove that
\(\{u_q<c\}\) is not convex.  This completes the proof.
\end{proof}

\begin{proof}[Proof of Theorem~\ref{thm:main}]
Choose any sufficiently small \(q\), and set
\(\Om=\Om_q\), \(u=u_q\).  Theorem~\ref{thm:stable-gap} proves that
\(\{u< -\log 2\}\) is nonconvex.  This
proves the assertion of Theorem~\ref{thm:main}.
\end{proof}

\section*{Acknowledgments}
 The  third authors thanks Professor Xu-Jia Wang for bringing this question to his attention in 2002 when he was a postdoctoral fellow in Australia National University. The  authors are supported by National Key R\&D Program of China 2025YFA1017603. Guohuan Qiu was also supported by the National Natural Science Foundation of China (Grant No. 12571227).

\end{document}